\newif\ifpdf
    \let\pdfoutput\undefined
\theoremstyle{definition}
\newtheorem{problem}{Problem}
\theoremstyle{remark}
\theoremstyle{plain}
\newtheorem{thm}{Theorem}
\newtheorem*{thm*}{Theorem}
\newtheorem{lem}[thm]{Lemma}
\newtheorem{prop}[thm]{Proposition}
\newtheorem{cor}[thm]{Corollary}
\newcommand{\norm}[1]{\ensuremath{\left\Vert #1 \right\Vert}}
\newcommand{\abs}[1]{\ensuremath{\left\vert #1 \right\vert}}
\DeclareMathOperator{\dist}{dist}
\DeclareMathOperator{\spn}{span}
\newcommand{\ux}{\ensuremath{\underline{x}}}
\newcommand{\uy}{\ensuremath{\underline{y}}}
\title{Diophantine exponents for mildly restricted approximation}
\author{Yann Bugeaud} 
\address{Y. Bugeaud, Université Louis Pasteur, Mathématiques, 7, rue
  René Descartes, F-67084 Strasbourg cedex, France} 
\author{Simon Kristensen} 
\address{S. Kristensen, Department of Mathematical Sciences, Faculty
  of Science, University of Aarhus, Ny Munkegade, Building 530,
  DK-8000 Aarhus C, Denmark}
\thanks{The present research is supported by a collaborative grant
  from the French Embassy in Denmark. SK is a Steno Research Fellow
  funded by the Danish Natural Science Research Council.}
\begin{document}

\begin{abstract}
  We are studying the Diophantine exponent $\mu_{n,\ell}$ defined for
  integers $1 \leq \ell < n$ and a vector $\alpha \in \mathbb{R}^n$ by
  letting $\mu_{n,\ell} = \sup\{\mu \geq 0: 0 < \Vert \ux \cdot
  \alpha\Vert < H(\ux)^{-\mu} \allowbreak \text{for infinitely many } \ux \in
  \mathcal{C}_{n,\ell} \cap \mathbb{Z}^n \}$, where $ \cdot $ is the
  scalar product and \mbox{$\Vert \cdot \Vert$} denotes the distance
  to the nearest integer and $\mathcal{C}_{n,\ell}$ is the generalised
  cone consisting of all vectors with the height attained among the
  first $\ell$ coordinates. We show that the exponent takes all values
  in the interval $[\ell+1, \infty)$, with the value $n$ attained for
  almost all $\alpha$. We calculate the Hausdorff dimension of the set
  of vectors $\alpha$ with $\mu_{n,\ell} (\alpha) = \mu$ for $\mu \geq
  n$. Finally, letting $w_n$ denote the exponent obtained by removing
  the restrictions on $\ux$, we show that there are vectors $\alpha$
  for which the gaps in the increasing sequence $\mu_{n,1} (\alpha)
  \leq \cdots \leq \mu_{n,n-1} (\alpha) \leq w_n (\alpha)$ can be
  chosen to be arbitrary.
\end{abstract}

\maketitle

\section{Introduction}
\label{sec:introduction}

Throughout the present paper, $\Vert \cdot \Vert$ denotes the distance
to the nearest integer.  For a $n$-tuple $\alpha = (\alpha_1, \ldots ,
\alpha_n)$ of real numbers, let denote by $w_n(\alpha)$ the supremum
of the real numbers $w$ such that the inequality
\begin{equation*}
  0 < ||x_1 \alpha_1 + \ldots + x_n \alpha_n || \le H(\ux)^{-w},  
\end{equation*}
has infinitely many solutions in integer $n$-tuples $\ux = (x_1,
\ldots , x_n)$ of height $H(\ux)$, where $H(\ux) = \max\{|x_1|, \ldots
, |x_n|\}$.  This is the most classical exponent of Diophantine
approximation.  Further exponents have been introduced recently by
Bugeaud and Laurent \cite{MR2266457}.

Approximation problems closely related to the study of the exponents
$w_n$ were considered by Jarn\'ik
\cite{jarnik37:_ueber_loesun_gleic_zahlen}, Schmidt \cite{MR0429762}
and Thurnheer \cite{MR649949, MR672846, MR774100, MR777764,
  MR1056107}. In these papers relatively mild restrictions are placed
on the integer vectors $\ux$. In Jarn\'ik's paper
\cite{jarnik37:_ueber_loesun_gleic_zahlen}, the additional restriction
was put on $\ux$ that at least $\ell$ of its coordinates had to be
non-zero. In the papers by Schmidt and Thurnheer, stronger
restrictions were made, all of which can be viewed as special cases of
the ones considered in the present paper, where we restrict the $\ux$
to a rectangular cone (see below).

We introduce and study the following exponents of restricted
approximation.  Let $1 \le \ell < n$ be integers and $\alpha =
(\alpha_1, \ldots , \alpha_n)$ be a real $n$-tuple.  We denote by
$\mu_{n, \ell}(\alpha)$ the supremum of the real numbers $\mu$ such
that the inequality
\begin{equation*}
  0 < ||x_1 \alpha_1 + \ldots + x_n \alpha_n || \le H(\ux)^{-\mu}
\end{equation*}
has infinitely many solutions in integer $n$-tuples $\ux = (x_1,
\ldots , x_n)$ satisfying
\begin{equation}
  \label{eq:4}
  \max\{|x_{\ell+1}|, \ldots , |x_n|\} < \max\{|x_1|, \ldots ,
  |x_{\ell}|\}. 
\end{equation}

This simply means that we impose that the height of $\ux$ is attained
among its $\ell$ first coordinates. We write $\mu_{n, \ell}$ (resp.
$w_n$) instead of $\mu_{n, \ell}(\alpha)$ (resp. $w_n (\alpha)$) when
there is no confusion.  By extension and for consistency of notation,
we define $\mu_{n,n} = w_n$.

Since the $\alpha_i$ do not play the same role, the situation is not
symmetrical, thus difficulties of a new kind occur.  It is different
from inhomogeneous approximation, since we have here less constraints.
Geometrically, we are restricting the `denominators' $\ux$ to lie in
some rectangular cone. In Schmidt's original paper \cite{MR0429762},
$n = 2$ and the $\ux$ were restricted to the first quadrant. The
rotated setting is better suited to our purposes, and it causes no
loss of generality as was also remarked by Schmidt.  Actually, our
results remain valid if \eqref{eq:4} is replaced by
\begin{equation*}
  \max\{|x_{\ell+1}|, \ldots , |x_n|\} < C \, \max\{|x_1|, \ldots ,
  |x_{\ell}|\},
\end{equation*}
where $C$ is an arbitrary given positive number. 

The exponent $\mu$ defined by Schmidt \cite{MR0429762} is simply
$\mu_{2,1}$ with our notation. The exponents $\mu_{n, n-1}$ correspond
to those introduced by Thurnheer \cite{MR1056107}. One of our aims in
the present psper is to show that, from a metric point of view, all
the exponents $\mu_{n, \ell}$ with $\ell = 1, \ldots , n$ have a
similar behaviour (Theorem 4).  In the opposite direction, we
construct explicit examples of $n$-tuples $\alpha$ for which all the
$\mu_{n, \ell} (\alpha)$, $1 \le \ell \le n$, are different (Theorem
2). We further investigate (Theorem 1) the set of values taken by the
functions $\mu_{n, \ell}$.

\section{Results}
\label{sec:results}

In the present paper, we are mainly concern with the {\it spectra} of
the exponents of Diophantine approximation, that is, with the set of
values taken by $\mu_{n, \ell}$ on the set of real $n$-tuples whose
coordinates are, together with $1$, linearly independent over the
rationals. The reason for the latter restriction on the set of
$n$-tuples is to avoid pathologies within the setup. Indeed, if we did
have linear dependence, we would essentially be studying a lower
dimensional problem, and the resulting spectrum would incorporate such
lower dimensional phenomena. This would in turn obscure the nature of
the exponent.

For convenience,
unless the contrary is stated explicitly, we assume that the
coordinates of the real $n$-tuples occurring from now on are, together
with $1$, linearly independent over the rationals.

Choosing $x_{\ell+1} = \cdots = x_n = 0$ and applying Dirichlet's {\it
  Schubfachprinzip}, we easily get that $\mu_{n, \ell} \ge \ell$, for
any positive integers $\ell$ and $n$ with $1 \le \ell < n$.
Furthermore, since there are $n$ free coefficients, namely $x_1,
\ldots , x_n$, in the definition of $\mu_{n, \ell}$, we can reasonably
expect $\mu_{n, \ell}$ to be often at least equal to $n$.  However, as
noted by Schmidt \cite{MR0429762}, for any positive $\epsilon$, there
exist real $n$-tuples $\alpha$ with $\mu_{n, 1}(\alpha) \le 2 +
\epsilon$.

\begin{thm}
  \label{thm:main1}
  Let $\ell$ and $n$ be positive integers with $1 \le \ell \leq n$.
  Then, $\mu_{n, \ell} (\alpha) = n$ for almost all real $n$-tuples
  $\alpha$.  Furthermore, for any real number $\mu_{\ell}$ with
  $\mu_{\ell} \ge \ell + 1$, there exist uncountably many $n$-tuples
  $\alpha$ having $\mu_{n, \ell} (\alpha) = \mu_{\ell}$.
\end{thm}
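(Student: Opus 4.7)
I address the two halves of the theorem in turn.

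For the almost-sure value, I begin with the trivial inequality $\mu_{n,\ell}(\alpha)\leq w_n(\alpha)$, valid because restricting $\ux$ to the subset $\mathcal{C}_{n,\ell}$ can only decrease the supremum. Combined with the classical Khintchine theorem $w_n(\alpha)=n$ almost everywhere, this gives the upper bound $\mu_{n,\ell}(\alpha)\leq n$ a.e. For the matching lower bound I would apply the divergent half of the Khintchine--Groshev theorem adapted to the cone: since $\#\{\ux\in\mathcal{C}_{n,\ell}\cap\integer^n:H(\ux)=H\}\asymp H^{n-1}$ constitutes a positive proportion of all integer vectors of height $H$, the series $\sum_H H^{n-1}\cdot H^{-n+\epsilon}$ diverges for every $\epsilon>0$, and the standard quasi-independence argument (Sprindzuk's lemma) produces infinitely many $\ux\in\mathcal{C}_{n,\ell}$ satisfying $\Vert\ux\cdot\alpha\Vert<H(\ux)^{-n+\epsilon}$ for almost every $\alpha$.

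For the spectrum claim, the case $\mu_\ell=\infty$ is immediate: choose $\alpha_1$ to be a Liouville number and $\alpha_2,\ldots,\alpha_n$ so that $1,\alpha_1,\ldots,\alpha_n$ are linearly independent over $\mathbb{Q}$; then $\ux=(q,0,\ldots,0)\in\mathcal{C}_{n,\ell}$ witnesses arbitrarily fast approximation. For a finite target $\mu_\ell\in[\ell+1,\infty)$, I would construct $\alpha_1$ with irrationality exponent exactly $\mu_\ell$ via a Jarn\'ik-type continued-fraction construction, specifying partial quotients that force $\Vert q_k\alpha_1\Vert\asymp q_k^{-\mu_\ell}$ along a subsequence of convergent denominators $(q_k)$. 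Restricting to the vectors $(q_k,0,\ldots,0)\in\mathcal{C}_{n,\ell}$ then yields the lower bound $\mu_{n,\ell}(\alpha)\geq w_1(\alpha_1)=\mu_\ell$ at once.

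The main obstacle is the matching upper bound $\mu_{n,\ell}(\alpha)\leq\mu_\ell$: one must verify that no vector $\ux\in\mathcal{C}_{n,\ell}$ having a nonzero entry among $x_{\ell+1},\ldots,x_n$ produces an approximation better than $H(\ux)^{-\mu_\ell-\epsilon}$. I would build $\alpha_{\ell+1},\ldots,\alpha_n$ inductively inside a shrinking nested sequence of intervals, excluding at each stage the sub-intervals that would allow some previously-enumerated integer vector to violate the desired upper bound. The crux will be obtaining sharp enough estimates on the measures of these forbidden sub-intervals, in the spirit of Schmidt's construction for $\mu_{2,1}$ and Thurnheer's for $\mu_{n,n-1}$, so that an open set of admissible choices always remains; the freedom left in the inductive procedure then furnishes uncountably many $\alpha$ with $\mu_{n,\ell}(\alpha)=\mu_\ell$.
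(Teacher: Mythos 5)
Your first half is fine and is essentially the paper's own route (Theorem \ref{thm:lebesgue}): the cone contains $\asymp H^{n-1}$ integer points of height $H$, and a quasi-independence/Borel--Cantelli argument gives the divergence half, while $\mu_{n,\ell}\le w_n=n$ a.e.\ gives the upper bound. The problem is the second half, precisely in the range $\mu_\ell\in[\ell+1,n)$, which the paper singles out as the most interesting part of the theorem.

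Your plan for the upper bound $\mu_{n,\ell}(\alpha)\le\mu_\ell$ --- fix $\alpha_1$ with $w_1(\alpha_1)=\mu_\ell$ and then choose $\alpha_{\ell+1},\dots,\alpha_n$ by a nested-interval exclusion of the strips where some cone vector approximates too well --- cannot work as described when $\mu_\ell<n$. For fixed $\alpha_1,\dots,\alpha_\ell$, the set of $(\alpha_{\ell+1},\dots,\alpha_n)$ violating the bound at a single cone vector of height $H$ (with nonzero last block) has measure $\asymp H^{-\mu_\ell-\epsilon}$, and there are $\asymp H^{n-1}$ such vectors, so the series $\sum_H H^{n-1-\mu_\ell-\epsilon}$ diverges; in fact the metrical theory shows that almost every choice of the remaining coordinates yields $\mu_{n,\ell}(\alpha)=n>\mu_\ell$. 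So the admissible fiber points form a null set, and nothing in a bare exclusion scheme guarantees that the nested intersection is nonempty; the claim that ``an open set of admissible choices always remains'' is exactly what fails. More structurally, the transference inequality of Theorem \ref{thm:main3}/Proposition \ref{prop:thurnheer} holds for \emph{every} $\alpha$, so $\mu_{n,\ell}(\alpha)\le\mu_\ell<n$ forces $\hat{w}_n(\alpha)\ge \ell(n-\ell)/(\mu_\ell-\ell)+n-\ell>n$: any successful $\alpha$ must be uniformly very well approximable, a property that generic survivors of an exclusion never have and that your construction makes no attempt to build in.

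This is the key idea the proposal is missing, and it is how the paper proceeds (Theorem \ref{thm:small_exponents}): following Schmidt, one takes $\alpha_{\ell+1},\dots,\alpha_n$, via the Davenport--Schmidt theorem, such that for every large $N$ there is $q\le N$ with $\norm{q\alpha_{\ell+i}}<e^{-N}$ for all but one index $i$; multiplying the linear form by $q$ then annihilates, up to $e^{-N}$ errors, all but one of the last coordinates and reduces the $n$-variable problem to forms in $\alpha_1,\dots,\alpha_\ell$ and a single $\alpha_{\ell+i}$, whose exponent is pinned at $\mu_\ell$ by a metrical lemma for the auxiliary exponent $\tilde{\nu}_{n,\ell}$ (Lemma \ref{lem:auxiliary_exps}). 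Note also that in the paper the prescribed value is carried jointly by $\alpha_1,\dots,\alpha_\ell$, and you never say how $\alpha_2,\dots,\alpha_\ell$ are chosen, which matters for vectors supported on the first block. For $\mu_\ell\ge n$ your route (exact $w_1(\alpha_1)=\mu_\ell$ plus a convergence Borel--Cantelli choice of the other coordinates) can be made to work and differs from the paper, which instead invokes the exact-order Hausdorff statement of Corollary \ref{cor:HDim}; but the gap in the range $[\ell+1,n)$ is fatal to the proposal as it stands.
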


In fact, we prove a more precise result than the first assertion (see
Theorem \ref{thm:lebesgue} and Theorem \ref{thm:hausdorff} in Section
\ref{sec:metrical-theory}).  Namely, we establish a zero--one law for
Lebesgue measure and a zero-infinity law for Hausdorff measure, which
implies that from the metrical point of view, all the exponents
$\mu_{n,1}, \dots, \mu_{n,n-1}$ and $w_n = \mu_{n,n}$ have the same
behaviour. However, in view of the remarks preceeding the statement of
Theorem \ref{thm:main1} together with the theorem, there is a distinct
difference in the spectrum of these exponents.  In particular, the
fact (established in Section \ref{sec:small-expon}) that the spectrum
of $\mu_{n, \ell}$ includes the interval $[\ell + 1, n]$ is the most
interesting part of Theorem \ref{thm:main1}.
 
For $(n, \ell) = (2, 1)$, the first assertion of Theorem
\ref{thm:main1} was proved by Thurnheer \cite{MR777764}.  The main
tool for the proof of Theorem \ref{thm:main1} is the theory of
Hausdorff measure.  Consequently, it does not yield to {\it explicit}
examples of $n$-tuples $\alpha$ with prescribed values for $\mu_{n,
  \ell} (\alpha)$.  However, inspired by Schmidt's construction of
$T$-numbers \cite{MR0279043, MR0344204}, we give an effective
construction of $n$-tuples $\alpha$ with prescribed exponents $\mu_{n,
  \ell} (\alpha)$, provided that these values are sufficiently large.
This approach enables us also to prove that the difference between
$w_n (\alpha)$ and $\mu_{n, \ell} (\alpha)$ can be arbitrarily large.
In the statement of the next theorem, established in Section
\ref{sec:diff-betw-mu_n}, we adopt the convention that $+ \infty + x =
+ \infty$ for any real number $x$.

\begin{thm}
  \label{thm:main2}
  Let $n \ge 2$ be an integer.  Let $\delta_1, \ldots , \delta_{n-1}$
  be elements of $\mathbb{R}_{\ge 0} \cup \{+\infty\}$.  Then, there
  exist uncountably many $n$-tuples $\alpha$ having $w_n(\alpha) =
  \mu_{n, n-1} (\alpha) + \delta_{n-1}$ and $\mu_{n, \ell} (\alpha) =
  \mu_{n, \ell - 1} (\alpha) + \delta_{\ell -1}$, for $\ell = 2,
  \ldots , n-1$.
\end{thm}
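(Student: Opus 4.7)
The plan is to construct $\alpha$ explicitly by an inductive procedure modelled on Schmidt's construction of $T$-numbers, as indicated in the paragraph preceding the statement. I would first reduce to the case where all $\delta_j$ are finite; the situation with $\delta_j = +\infty$ is handled by letting the corresponding target parameter tend to infinity during the construction. Fix targets $\mu_1^* \le \mu_2^* \le \cdots \le \mu_n^*$ with $\mu_\ell^* - \mu_{\ell-1}^* = \delta_{\ell-1}$ and $\mu_1^*$ chosen sufficiently large for the upper-bound arguments below to apply. The aim is to produce $\alpha$ with $\mu_{n,\ell}(\alpha) = \mu_\ell^*$ for every $\ell \in \{1,\ldots,n\}$, writing $\mu_n^* = w_n(\alpha)$.

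The construction produces rapidly growing heights $Q_1 < Q_2 < \cdots$, say $Q_{k+1} = Q_k^T$ for a large constant $T$, together with integer vectors $\ux^{(k)} \in \integer^n$ and approximants $\alpha^{(k)} \in \reals^n$ converging to $\alpha$. To each stage $k$ one associates an index $\ell_k \in \{1,\ldots,n\}$, the sequence $(\ell_k)$ visiting each $\ell$ infinitely often. At stage $k$ one chooses $\ux^{(k)}$ with $H(\ux^{(k)}) \asymp Q_k$ attained in coordinate $\ell_k$, while $|x_i^{(k)}|$ for $i > \ell_k$ is bounded by a small power $Q_k^\eta$. Thus $\ux^{(k)}$ satisfies \eqref{eq:4} for every $\ell \ge \ell_k$ but for no smaller $\ell$, so it is an eligible witness precisely for $\mu_{n,\ell_k},\ldots,\mu_{n,n-1}$ and $w_n$. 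One then perturbs $\alpha^{(k-1)}$ in a carefully chosen one-dimensional direction so that $|\ux^{(k)} \cdot \alpha^{(k)}|$ is slightly smaller than $Q_k^{-\mu^*_{\ell_k}}$, while keeping the perturbation small enough that the previously built witnesses retain their approximation quality and the sequence $\alpha^{(k)}$ converges. This yields the lower bounds $\mu_{n,\ell}(\alpha) \ge \mu_\ell^*$.

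The main obstacle will be the matching upper bound $\mu_{n,\ell}(\alpha) \le \mu_\ell^*$. I would argue by contradiction: suppose there is $\uy \in \integer^n$ satisfying \eqref{eq:4} for some fixed $\ell$ with $|\uy \cdot \alpha| \le H(\uy)^{-\mu}$ for some $\mu > \mu_\ell^*$ and height $H(\uy) \in (Q_k,Q_{k+1}]$. Combining $\uy$ with one of the previously constructed witnesses $\ux^{(j)}$ through a $\integer$-linear combination (or a two-dimensional determinant estimate) produces either an integer vector that violates Dirichlet's pigeonhole principle or one whose existence is incompatible with the choices made in the construction. The growth rate $T$ and the baseline $\mu_1^*$ must be chosen large enough for this transference step to succeed; this is precisely where the ``sufficiently large'' caveat in the paragraph before the theorem enters, and it is the heart of the proof.

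To finish, I would check that $1,\alpha_1,\ldots,\alpha_n$ are $\rationals$-linearly independent by a standard avoidance argument: at each stage the perturbation has several degrees of freedom, so it can be steered away from the countably many rational hyperplanes that destroy linear independence. The same freedom lets the construction branch into at least two essentially distinct continuations at each step, producing a Cantor-type family of limit points $\alpha$ and hence the uncountability claimed in the theorem.
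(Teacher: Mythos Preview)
Your approach differs substantially from the paper's: you attempt a direct inductive construction of a generic $\alpha \in \reals^n$, whereas the paper works on the Veronese curve, constructing a single real number $\xi$ and taking $\alpha = (\xi^n, \ldots, \xi)$. Concretely, the paper builds $\xi$ as a limit of algebraic numbers $\xi_j = (c_j + \gamma_j)/g_j$ via Bugeaud's refinement of Schmidt's $T$-number construction. For $j \equiv \ell \pmod{n}$ the number $\gamma_j$ is chosen so that the shifted minimal polynomial $Q_j(X) = P_j(g_j X - c_j)$ has its height attained precisely at the coefficient of $X^{\ell}$; evaluating $Q_j$ at $\xi$ then produces a witness for $\mu_{n,\ell}$ of the prescribed quality.

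The genuine gap in your plan is the upper bound. You propose to combine a hypothetical bad $\uy$ with a \emph{single} previous witness $\ux^{(j)}$ via a $\integer$-linear combination or a two-dimensional determinant. For $n = 2$ this can be made to work, but for $n \ge 3$ two vectors in $\integer^n$ with small $\uy \cdot \alpha$ and $\ux^{(j)} \cdot \alpha$ do not by themselves yield a contradiction: the associated $2 \times (n+1)$ matrix has no single integer determinant that must be $\ge 1$, and a $\integer$-linear combination of $\uy$ and $\ux^{(j)}$ need neither be short nor respect any cone condition. The paper avoids this difficulty by exploiting the polynomial structure available on the Veronese curve: any bad approximation corresponds to an integer polynomial $Q(X)$ with $|Q(\xi)|$ small, which one factors as $Q = a R_1 \cdots R_p$ into irreducibles; each factor has a root close to $\xi$, and the inductively maintained Liouville-type inequality $|\xi - \theta| \gg H(\theta)^{-\lambda}$ for algebraic $\theta \notin \{\xi_j\}$ bounds every $|R_i(\xi)|$ from below, whence $|Q(\xi)| \gg H(Q)^{-\mu_{n,1}}$ via the Gelfond inequality. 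This factorisation device has no analogue for a generic $\alpha \in \reals^n$, so a direct construction would need a different mechanism for the upper bound---for example a Cantor-type avoidance argument that, at each stage, excises the neighbourhoods of \emph{all} hyperplanes $\{\uy \cdot \beta \in \integer\}$ with $H(\uy)$ in the relevant range---rather than the pairwise determinant argument you sketch.
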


The proof of Theorem \ref{thm:main2} rests on the effective
construction of real numbers $\xi$ for which the $n$-tuples $(\xi^n,
\ldots, \xi)$ have the required properties.

The above theorems say nothing on the values of the spectrum of
$\mu_{n, \ell}$ belonging to the interval $[\ell, \ell + 1)$.  Schmidt
proved that $\mu_{2, 1} (\alpha) \ge ( 1 + \sqrt{5})/2$, a result
extended to the exponents $\mu_{n, n-1}$ by Thurnheer \cite{MR1056107}. 
In particular, the following problems remain open.

\begin{problem}
  Let $\ell$ and $n$ be integers
  with $1 \le \ell < n$.  To prove or to disprove that there exist
  $\alpha$ such that
  \begin{equation*}
    \mu_{n, \ell} (\alpha) < \ell + 1.
  \end{equation*}
\end{problem}

For $n=2$, Problem 1 was previously posed by 
Schmidt \cite{MR0429762, MR0702204}.

\begin{problem}
  To establish a uniform lower bound for $\mu_{n, 1}$ that tends to
  $2$ when $n$ tends to infinity.
\end{problem}

We have been unable to make any progress on these questions (see
however Section \ref{sec:restr-appr-along}).  Nevertheless, for sake
of completeness, we restate the lower bounds obtained by Schmidt and
Thurnheer, by making use of the exponents of Diophantine approximation
$w_n$ and $\hat{w}_n$, the latter being defined as follows.  For an
integer $n \ge 1$ and a real $n$-tuple $\alpha = (\alpha_1, \ldots ,
\alpha_n)$, we denote by $\hat{w}_n(\alpha)$ the supremum of the real
numbers $\hat{w}$ such that, for any real number $X > 1$, the
inequality
\begin{equation*}
  0 < ||x_1 \alpha_1 + \ldots + x_n \alpha_n || \le X^{-\hat{w}}
\end{equation*}
has an integer solution $\ux = (x_1, \ldots , x_n)$ satisfying $H(\ux)
\le X$.

\begin{thm}
  \label{thm:main3}
  Let $\ell$ and $n$ be integers with $1\le \ell < n$.  For any real
  $n$-tuple $\alpha$, we have
  \begin{equation}
    \label{eq:5}
    \mu_{n, \ell} (\alpha) \ge  \frac{\ell \, \hat{w}_n (\alpha)}
    {\hat{w}_n (\alpha) - n + \ell}
  \end{equation}
  and
  \begin{equation*}
  \mu_{n, n-1} (\alpha) \ge \hat{w}_n (\alpha) - 1 + 
  \frac{\hat{w}_n (\alpha)}{w_n (\alpha)}.
\end{equation*}
\end{thm}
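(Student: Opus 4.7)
Both inequalities of Theorem \ref{thm:main3} can be approached via Minkowski's theorem on linear forms applied to a parallelepiped whose shape is optimised to the definition of $\hat{w}_n$, with the second inequality requiring an additional input from the exponent $w_n$.

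For the first inequality, fix $\hat w < \hat w_n(\alpha)$ and, given a large $X > 1$, consider the symmetric parallelepiped $\mathcal{P}_X \subset \mathbb{R}^{n+1}$ defined by
\[ |x_i| \le X^{(\hat w - n + \ell)/\ell} \text{ for } 1 \le i \le \ell, \quad |x_i| \le X \text{ for } \ell+1 \le i \le n, \quad |\ux \cdot \alpha - p| \le X^{-\hat w}, \]
whose volume is $2^{n+1}$ since the product of the half-side-lengths is $1$. By Minkowski's convex body theorem, $\mathcal{P}_X$ contains a nonzero integer point $(\ux, p)$, and $X^{-\hat w} < 1$ forces $\ux \neq 0$. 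If the height of $\ux$ is attained among the first $\ell$ coordinates, i.e.\ $\ux \in \mathcal{C}_{n,\ell}$, then $H(\ux) \le X^{(\hat w - n + \ell)/\ell}$, and consequently
\[ \|\ux \cdot \alpha\| \le X^{-\hat w} \le H(\ux)^{-\ell \hat w/(\hat w - n + \ell)}. \]
Running $X$ through a sequence tending to infinity and then letting $\hat w \uparrow \hat w_n(\alpha)$ yields \eqref{eq:5}.

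The principal obstacle is the case in which the Minkowski vector lies outside $\mathcal{C}_{n,\ell}$, i.e.\ its height is attained among the last $n - \ell$ coordinates, so $H(\ux) \le X$ and $\|\ux \cdot \alpha\| \le X^{-\hat w}$. This is merely the content of the definition of $\hat w_n$ and produces no element of the restricted cone. My plan for this case is to apply Minkowski at two nearby scales $X_1 < X_2$ with $X_2/X_1$ bounded; when the two resulting integer vectors are $\mathbb Z$-linearly independent, a suitable determinantal integer combination cancels their dominant last coordinates, places the combination in $\mathcal{C}_{n,\ell}$, and preserves the quality of the approximation up to negligible bookkeeping. One must also verify that distinct scales yield distinct combinations, so that the $\limsup$ definition of $\mu_{n,\ell}$ sees infinitely many witnesses.

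For the second inequality, the scheme is the same but now exploits the sporadic exponent $w_n$: for any $w < w_n(\alpha)$ there exist infinitely many integer vectors $\uz$ with $\|\uz \cdot \alpha\| \le H(\uz)^{-w}$. When such a $\uz$ has its height in the last coordinate, $|z_n| = Q$, one pairs it with a uniform approximation $\uy$ of height $Y$ chosen optimally in terms of $Q$, $\hat w$, $w$, and forms $\uw = y_n\uz - z_n \uy$, whose $n$-th coordinate vanishes. Then $\uw \in \mathcal{C}_{n,n-1}$, and the estimates $H(\uw) \le Q|y_n|+Y|z_n|$ and $\|\uw \cdot \alpha\| \le |y_n|\|\uz \cdot \alpha\| + |z_n|\|\uy \cdot \alpha\|$ yield, after optimisation in $Y$, the exponent $\hat w_n - 1 + \hat w_n/w_n$. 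The hard part is replacing the crude bounds $|y_i|, |z_i| < |y_n|, |z_n|$ for $i < n$ by sharper ones on the non-dominant coordinates---exploiting that $\uy, \uz$ themselves arise as best approximations and their subdominant coordinates obey successive-minima-type estimates---without which the naive optimisation gives a significantly weaker bound.
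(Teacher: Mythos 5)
Your ``good case'' is fine (indeed it overshoots, since $t\mapsto \ell t/(t-n+\ell)$ is decreasing, so any $\hat w\le\hat w_n$ gives at least the claimed exponent), but the entire burden of the theorem lies in the bad case, and there your plan has a genuine gap. If at two nearby scales $X_1<X_2\asymp X$ the Minkowski points $\ux,\ux'$ both have their height among the last $n-\ell$ coordinates, the determinantal combination $x'_j\ux-x_j\ux'$ that kills the $j$-th coordinate uses coefficients of size up to $X$; its height can then be of order $X^2$ while the linear form is only bounded by $\abs{x'_j}\,\norm{\ux\cdot\alpha}+\abs{x_j}\,\norm{\ux'\cdot\alpha}\ll X^{1-\hat w}$, so the exponent measured against the new height can drop to about $(\hat w-1)/2$ --- this is not ``negligible bookkeeping'', and no uniform control on the coefficients is available. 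Worse, for $\ell<n-1$ cancelling one trailing coordinate does not place the combination in $\mathcal{C}_{n,\ell}$: the remaining $n-\ell-1$ trailing coordinates may still dominate the leading block, and two vectors do not suffice to cancel them all (nor is linear independence of the two Minkowski points guaranteed). So the step ``places the combination in $\mathcal{C}_{n,\ell}$ and preserves the quality of the approximation'' is exactly the missing proof.

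The paper's argument avoids repairing the bad case altogether by running the logic in the opposite direction: the box is shaped by $\mu_{n,\ell}$, not by $\hat w_n$. With $\epsilon>0$ small and $\eta$ defined by $\mu_{n,\ell}+\epsilon=(\ell\eta+n-\ell)/\eta$, consider the body $\abs{x_i}\le N^{\eta}$ for $i\le\ell$, $\abs{x_i}\le N$ for $i>\ell$, $\abs{x_1\alpha_1+\cdots+x_n\alpha_n+x_0}\le N^{-\ell\eta-n+\ell}$. By the definition of $\mu_{n,\ell}$ as a supremum, for all large $N$ no integer point of this body can lie in the cone with that approximation quality; hence every Minkowski point satisfies $H(\ux)\le N$ and provides a \emph{uniform} approximation of quality $N^{-(\ell\eta+n-\ell)}$, so that $\hat w_n\ge\ell\eta+n-\ell=(n-\ell)\bigl(1+\ell/(\mu_{n,\ell}+\epsilon-\ell)\bigr)$; letting $\epsilon\to0$ and inverting this decreasing relation gives precisely \eqref{eq:5}. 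Your proposal contains no trace of this inversion and therefore stands or falls with the failed repair step. Finally, note that the paper does not reprove the second inequality (it is quoted from Thurnheer, and from Schmidt for $n=2$), and your sketch for it leaves exactly the decisive point --- sharper bounds on the subdominant coordinates of $\uy$ and $\uz$ --- unproven, so it too is not yet a proof.
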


The second inequality from Theorem \ref{thm:main3} implies that
$\mu_{n, n-1} \ge \hat{w}_n - 1$. Combined with \eqref{eq:5}, this
yields the lower bound
\begin{equation}
  \label{eq:Thurn}
  \mu_{n, n-1} (\alpha) \ge \frac{n - 1 + \sqrt{n^2 + 2n - 3}}{2},
\end{equation}
for any $\alpha$ in $\mathbb{R}^n$.  This was established in 1976 by
Schmidt \cite{MR0429762} for $n=2$ and in 1990 by Thurnheer
\cite{MR1056107} for arbitrary $n$.  Observe that the right hand side
of \eqref{eq:Thurn} is greater than $n - 1/n$.

Furthermore, it should be noted that Schmidt's and Thurnheer's results
are slightly more precise, since they assert the existence of a
positive constant $C$ such that, for any real $n$-tuple $\alpha$
(whose coordinates, together with $1$, are linearly independent over
the rationals) there exist infinitely many integer $n$-tuples $\ux =
(x_1, \ldots , x_n)$ satisfying
\begin{equation*}  
  0 < ||x_1 \alpha_1 + \ldots + x_n \alpha_n || \le C
  H(\ux)^{-(n - 1 + \sqrt{n^2 + 2n - 3})/2}
\end{equation*}
and
\begin{equation*}
|x_n| < \max\{|x_1|, \ldots , |x_{n-1}|\}. 
\end{equation*}

Although Theorem \ref{thm:main3} is essentially proved in the papers
by Schmidt and Thurnheer, we include a proof of \eqref{eq:5}, 
postponed to Section \ref{sec:lower-bounds-expon}.

Throughout, we use the Vinogradov notation and write $a \ll b$ if
there is a constant $C > 0$ such that $a \leq Cb$. If $a \ll b$ and $b
\ll a$, we write $a \asymp b$. Furthermore,
$\dim(E)$ denotes the Hausdorff dimension of the set $E$.

\section{The metrical theory for the exponents $\mu_{n,\ell}$}
\label{sec:metrical-theory}

It is the purpose of the present section to show that the exponent
$\mu_{n, \ell}$ takes all values in the interval $[n, +\infty)$. This
follows from a more general metrical result, which gives a
complete metrical description of the sets
\begin{alignat*}{2}
  \mathcal{L}_{n,\ell}(\psi) = \big\{(\alpha_1, &\dots, \alpha_n) \in
  \mathbb{R}^n : \norm{\alpha_1 x_1 + \dots + \alpha_n x_n} \leq
  \psi(H(x))\\ &\text{ for infinitely many } \ux = (x_1, \dots, x_n) \in
  \mathbb{Z}^n \\ &\text{ with } \max\{\abs{x_{\ell+1}}, \dots,
  \abs{x_n}\} < \max\{\abs{x_1}, \dots, \abs{x_{\ell}}\}\big\}
\end{alignat*}
Our result also includes the case $\ell = n$, where the last condition
on the integer vectors $\ux$ is empty and Theorem \ref{thm:lebesgue}
below reduces to a classical result of Groshev \cite{groshev38}.

\begin{thm}
  \label{thm:lebesgue}
  Let $\psi : \mathbb{Z}_{\geq 0} \rightarrow \mathbb{R}_{>0}$ be a
  non-increasing function, let $n,\ell$ be integers with $1 \leq
  \ell \leq n$.  Then, $\mathcal{L}_{n,\ell}$ is null (resp.~full)
  according to the convergence (resp.~divergence) of the series
  \begin{equation*}
    \sum_{n=1}^\infty h^{n-1}\psi(h).
  \end{equation*}
\end{thm}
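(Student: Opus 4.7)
Work in the unit cube $[0,1]^n$; since the event defining $\mathcal{L}_{n,\ell}(\psi)$ is invariant under translations by $\mathbb{Z}^n$, this is no loss. For $\ux \in \mathbb{Z}^n \setminus \{0\}$ put
\[
  E_\ux = \{\alpha \in [0,1]^n : \norm{\ux \cdot \alpha} \leq \psi(H(\ux))\},
\]
so $\abs{E_\ux} \asymp \psi(H(\ux))$, and write $\mathcal{R}_{n,\ell}$ for the subset of $\mathbb{Z}^n$ defined by \eqref{eq:4}; then $\mathcal{L}_{n,\ell}(\psi) \cap [0,1]^n = \limsup_{\ux \in \mathcal{R}_{n,\ell}} E_\ux$. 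An elementary count
\[
  \#\{\ux \in \mathcal{R}_{n,\ell} : H(\ux) = h\} \ll h^{n-1}
\]
(one of the first $\ell$ coordinates is $\pm h$, the remaining $n-1$ coordinates lie in $\{-h,\dots,h\}$) yields $\sum_{\ux \in \mathcal{R}_{n,\ell}} \abs{E_\ux} \ll \sum_h h^{n-1}\psi(h)$, and the first Borel-Cantelli lemma settles the convergence half.

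\textbf{Divergence.} The cones are nested $\mathcal{R}_{n,1} \subseteq \cdots \subseteq \mathcal{R}_{n,n}$, and likewise $\mathcal{L}_{n,1}(\psi) \subseteq \cdots \subseteq \mathcal{L}_{n,n}(\psi)$, so it suffices to treat the hardest case $\ell = 1$ (for $\ell = n$ we recover the classical Khintchine-Groshev theorem of Groshev). I would apply the divergent Borel-Cantelli lemma in its quantitative (Chebyshev / Paley-Zygmund second-moment) form to $\{E_\ux\}_{\ux \in \mathcal{R}_{n,1}}$: the first-moment sum $\sum \abs{E_\ux} \asymp \sum_h h^{n-1}\psi(h)$ diverges by the count above, and the second moment reduces to the pairwise overlap inequality
\[
  \sum_{\substack{\ux,\uy \in \mathcal{R}_{n,1} \\ H(\ux), H(\uy) \leq N}} \abs{E_\ux \cap E_\uy} \ll \Bigl(\sum_{\substack{\ux \in \mathcal{R}_{n,1} \\ H(\ux) \leq N}} \abs{E_\ux}\Bigr)^2.
\]
This is essentially the classical Khintchine-Groshev overlap calculation, because $\abs{E_\ux \cap E_\uy}$ depends only on the linear-algebraic relationship between $\ux$ and $\uy$ via a unimodular change of variable; the cone restriction merely prunes the index set without altering individual pairwise estimates. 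Paley-Zygmund then gives $\abs{\limsup E_\ux} > 0$, and a zero-one law for linear-form limsup sets on the torus (of Cassels/Gallagher type) upgrades positivity to full measure.

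\textbf{The main obstacle} is the pairwise overlap estimate, which requires splitting pairs $\ux,\uy$ into parallel, linearly independent, and common-divisor cases as in the classical Groshev proof; the cone restriction leaves individual estimates intact, but the casework remains delicate. An alternative to writing out the second-moment machinery in full would be to invoke a general divergent Khintchine-Groshev framework (e.g.\ a Sprind\v{z}uk-type or Beresnevich-Dickinson-Velani ubiquity argument) applied to the restricted index set $\mathcal{R}_{n,1}$, which would reduce the divergence half to verifying a ubiquity hypothesis for the restricted lattice of resonant points.
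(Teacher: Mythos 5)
Your convergence half is fine (the paper gets it even more cheaply, by observing that $\mathcal{L}_{n,\ell}(\psi)$ sits inside the unrestricted Khintchine--Groshev set, which is already null under convergence). The divergence half, however, has a genuine gap at its last step. The plan ``second moment $+$ Paley--Zygmund gives $\abs{\limsup E_\ux}>0$, then a zero-one law of Cassels/Gallagher type upgrades this to full measure'' leans on a zero-one law that is not available off the shelf in this setting: the classical zero-one laws for $\norm{\ux\cdot\alpha}<\psi(H(\ux))$ (Cassels, Gallagher, and their linear-forms descendants) are stated and proved for the \emph{full} set of integer vectors $\ux$, and their proofs exploit transformations of $\alpha$ whose dual action on $\ux$ ranges over all of $\mathbb{Z}^n$; here the $\ux$ are restricted to the cone \eqref{eq:4}, and the only invariance you actually get for free is invariance of $\mathcal{L}_{n,\ell}(\psi)$ under translation by $\mathbb{Z}^n$, which is far too weak to promote positive measure to full measure. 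So this step needs a proof (e.g.\ a localized second-moment estimate on every ball $B$ giving $\abs{B\cap\limsup E_\ux}\gg\abs{B}$ uniformly, followed by Lebesgue density, or a Cassels-type scaling lemma reproved for the restricted index set), not a citation. The overlap estimate itself is less of a problem than you fear: since the cone contains a positive proportion of the height-$h$ vectors, the restricted second moment is dominated by the unrestricted one, but the proportional-pair casework you defer is exactly the part the standard proofs avoid by pruning.

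For contrast, the paper's divergence argument removes both difficulties at once. It prunes the index set to vectors $\ux$ with $\gcd(x_1,\dots,x_n)=1$, $x_n\geq 1$, and the strengthened cone condition $2\max\{\abs{x_{\ell+1}},\dots,\abs{x_n}\}\leq\max\{\abs{x_1},\dots,\abs{x_\ell}\}$ (so that membership in \eqref{eq:4} is guaranteed). Any two distinct such vectors are automatically linearly independent, hence by \eqref{eq:2} the corresponding events are \emph{exactly} pairwise independent, and a M\"obius/totient count shows the pruned family still carries a divergent measure sum whenever $\sum h^{n-1}\psi(h)=\infty$ (with a dyadic-block argument when $n=2$). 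Then the elementary pairwise-independence Borel--Cantelli lemma (Lemma \ref{lem2}) yields measure one directly on $[0,1]^n$, with no zero-one law, no Paley--Zygmund constant to improve, and no proportional pairs to estimate. If you want to salvage your route, either carry out the localization-plus-density argument, or adopt the pruning idea, which is the cleaner fix.
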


\begin{proof}[Proof of Theorem \ref{thm:lebesgue} (convergence part)]
  The case of convergence is a consequence of the usual
  Khintchine--Groshev theorem (see, \emph{e.g.}, \cite{MR1727177}),
  since $\mathcal{L}_{n,\ell}$ is a subset of the corresponding set
  without restrictions on points $\ux$. Since the measure of the
  larger set is zero in the case of convergence, the convergence half
  follows.
\end{proof}

The case of divergence will be derived from the following result,
which is the simplest version of the divergence part of the
Borel--Cantelli Lemma.
\begin{lem}\label{lem2}
  Let $(\Omega,A,\mu) $ be a probability space 
  and $(E_n)_{n \ge 1}$ be a sequence
  of $\mu$-measurable sets such that $\sum_{n=1}^\infty
  \mu(E_n)=\infty $. Suppose that whenever $m \neq n$,
  \begin{equation*}
    \mu(E_m \cap E_n) = \mu(E_m) \mu(E_n).
  \end{equation*}
  Then,
  \begin{equation*}
    \mu( \limsup_{n \to \infty} E_n ) = 1
  \end{equation*}
\end{lem}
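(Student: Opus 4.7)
The plan is to apply the second moment method to the partial sums $S_N = \sum_{n=1}^N \indicator{E_n}$, viewed as $\mu$-measurable functions on $\Omega$. The first observation is that $\expectation{S_N} = \sum_{n=1}^N \mu(E_n)$ tends to $+\infty$ by hypothesis. The pairwise independence assumption kills every off-diagonal term in the variance, giving
\begin{equation*}
\mathrm{Var}(S_N) = \sum_{n=1}^N \mu(E_n)(1 - \mu(E_n)) \leq \expectation{S_N}.
\end{equation*}
Chebyshev's inequality then yields, for each integer $k \geq 1$ and every $N$ with $\expectation{S_N} > k$,
\begin{equation*}
\mu(S_N \leq k) \leq \mu\bigl(\abs{S_N - \expectation{S_N}} \geq \expectation{S_N} - k\bigr) \leq \frac{\expectation{S_N}}{(\expectation{S_N} - k)^2},
\end{equation*}
and the right-hand side tends to $0$ as $N \to \infty$.

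The remaining step is to translate this into a statement about $\limsup_{n \to \infty} E_n$. Setting $S_\infty = \sum_{n=1}^\infty \indicator{E_n}$, the complement of $\limsup E_n$ equals $\{S_\infty < \infty\} = \bigcup_{k \geq 1} \{S_\infty \leq k\}$. Since $S_N$ is non-decreasing in $N$ and bounded above by $S_\infty$, one has $\{S_\infty \leq k\} \subseteq \{S_N \leq k\}$ for every $N$, so the previous bound forces $\mu(S_\infty \leq k) = 0$ for each fixed $k$. A countable union over $k$ then gives $\mu(\limsup E_n) = 1$.

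The only subtle point, and thus the main obstacle (albeit a minor one), is this final passage: Chebyshev alone delivers convergence of $S_N$ to infinity only \emph{in measure}, which is not formally strong enough for the limsup conclusion. The remedy is to apply the Chebyshev estimate separately at each level $k$ before taking the countable union, exploiting the monotonicity $S_N \leq S_\infty$ to reduce the almost-sure lower bound on $S_\infty$ to the in-measure bound on $S_N$ already in hand.
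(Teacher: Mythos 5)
Your proof is correct. Note that the paper does not actually prove Lemma \ref{lem2}: it is stated without proof as ``the simplest version of the divergence part of the Borel--Cantelli Lemma'' and then applied, so there is no argument in the paper to compare against; you have simply supplied the standard second-moment proof, and every step checks out. The pairwise-independence hypothesis kills exactly the off-diagonal covariance terms, giving $\mathrm{Var}(S_N)\le\expectation{S_N}$; the inclusion $\{S_N\le k\}\subseteq\{\abs{S_N-\expectation{S_N}}\ge\expectation{S_N}-k\}$ is valid once $\expectation{S_N}>k$, so Chebyshev gives $\mu(S_N\le k)\to 0$ for each fixed $k$; and your final passage is handled properly, since the monotone bound $S_N\le S_\infty$ yields $\mu(S_\infty\le k)\le\mu(S_N\le k)$ for every $N$, hence $\mu(S_\infty\le k)=0$, and the countable union over $k$ gives $\mu(\limsup_{n\to\infty}E_n)=1$ -- this level-by-level argument is exactly the right way to upgrade the in-measure information to the almost-sure statement. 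For comparison, the usual reference proof of this lemma (Erd\H{o}s--R\'enyi, or the Kochen--Stone inequality) proceeds via the bound $\mu(\limsup E_n)\ge\limsup_N\bigl(\sum_{n\le N}\mu(E_n)\bigr)^2\bigl/\sum_{m,n\le N}\mu(E_m\cap E_n)$, which under pairwise independence also gives the value $1$; your direct Chebyshev route is equivalent in substance and, if anything, more elementary to verify.
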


\begin{proof}[Proof of Theorem \ref{thm:lebesgue} (divergence part)]
  In order to prove that our set has full measure, we note that
  $\mathcal{L}_{n,\ell}$ is invariant under translation by integer
  vectors. Hence it suffices to show that $\mathcal{L}_{n,\ell} \cap
  [0,1]^n$ has measure $1$.  We consider the sets
  \begin{equation*}
    B(x_1, \dots, x_n) = \left\{(\alpha_1, \dots, \alpha_n) \in
      [0,1]^n : \norm{x_1\alpha_1 + \cdots + x_n\alpha_n} \leq
      \psi(H(\ux))\right\},
  \end{equation*}
  where $\ux = (x_1, \dots, x_n) \in \mathbb{Z}^n$. It is easy to see
  that
  \begin{equation}
    \label{eq:1}
    \abs{B(x_1, \dots, x_n)} \asymp \psi(H(\ux)),
  \end{equation}
  where $\abs{B}$ denotes the Lebesgue measure of the set $B$.
  Furthermore, if $(x_1, \dots, x_n)$ and $(x'_1, \dots, x'_n)$ are
  linearly independent, then
  \begin{equation}
    \label{eq:2}
    \abs{B(x_1, \dots, x_n) \cap B(x'_1, \dots, x'_n)}=\abs{B(x_1,
      \dots, x_n)} \cdot \abs{B(x'_1, \dots, x'_n)}.
  \end{equation}
  This is proved in, \emph{e.g.}, \cite{MR1258756}.

  We will impose further restrictions on the $\ux$'s in order to
  ensure that \eqref{eq:2} holds for any pair of distinct vectors. Any
  vector $\alpha$ satisfying infinitely many of the further restricted
  inequalities automatically lies within $\mathcal{L}_{n,\ell}$.
  Hence, a lower bound on the estimate on the measure of the further
  restricted set implies a lower bound on the measure of the original
  set.

  We define
  \begin{alignat*}{2}
    P_N = \big\{(x_1, \dots, x_n) \in \mathbb{Z}_{\geq0}^n \ : \
    &H(\ux) =
    N, x_n \geq 1 \\
    &\gcd(x_1, \dots, x_n) = 1 \text{ and } \\
    & 2 \max\{\abs{x_{\ell+1}}, \dots, \abs{x_n}\} \leq
    \max\{\abs{x_1}, \dots, \abs{x_{\ell}}\}\big\}.
  \end{alignat*}
  If $\ux \in P_N$ and $\ux' \in P_{N'}$ are linearly dependent, then
  for some integer $r \in \mathbb{Z}$, $\ux = r \ux'$ or $r \ux =
  \ux'$. In either case, by assumption of coprimality, $r = \pm 1$,
  and since the last coordinates are assumed to be positive, $r =1$,
  whence $\ux = \ux'$. Hence, \eqref{eq:2} holds for any pair of
  distinct vectors $\ux \in P_N$ and $\ux' \in P_{N'}$.

  Let $\mu : \mathbb{Z}_{\geq 0} \rightarrow \{-1,0,1\}$ denote the
  M\"obius function, \emph{i.e.},
  \begin{equation*}
    \mu(n) =
    \begin{cases}
      0 & \text{if } n=0, \\
      1 & \text{if } n=1, \\
      (-1)^k & \text{if $n$ has $k$ distinct prime factors.}
    \end{cases}
  \end{equation*}
  We use the identity
  \begin{equation*}
    \sum_{d \vert n} \mu(d) =
    \begin{cases}
      1 & \text{if } n=1, \\
      0 & \text{otherwise.}
    \end{cases}
  \end{equation*}
  With this identity, we can extimate the number of elements of $P_N$
  as follows,
  \begin{alignat*}{2}
    \# P_N &= \sum_{\substack{H(\ux) = N \\ \gcd(x_1, \dots, x_n) = 1
        \\ 2 \max\{\abs{x_{\ell+1}}, \dots, \abs{x_n}\} \leq
        \max\{\abs{x_1},
        \dots, \abs{x_{\ell}}\}}} 1 \\
    &= \sum_{\substack{H(\ux) = N \\ \gcd(x_1, \dots, x_n) = k \\ 2
        \max\{\abs{x_{\ell+1}}, \dots, \abs{x_n}\} \leq
        \max\{\abs{x_1},
        \dots, \abs{x_{\ell}}\}}} \sum_{d \vert k} \mu(d) \\
    &= \sum_{d \vert N} \mu(d) \sum_{\substack{H(\ux') = N/d \\ 2
        \max\{\abs{x_{\ell+1}}, \dots, \abs{x_n}\} \leq
        \max\{\abs{x_1},
        \dots, \abs{x_{\ell}} \}}} 1 \\
    &\asymp \sum_{d \vert N} \mu(d) \left(\frac{N}{d}\right)^{n-1}
    \ell.
  \end{alignat*}

  If $n=2$, then
  \begin{equation*}
    \sum_{d \vert N} \mu(d) \left(\frac{N}{d}\right) \ell 
    = \ell \phi(N),
  \end{equation*}
  where $\phi$ denotes the Euler totient function. If $n > 2$,
  \begin{equation*}
    \sum_{d \vert N} \mu(d) \left(\frac{N}{d}\right)^{n-1} \ell 
    = \ell N^{n-1} \sum_{d \vert N} \frac{\mu(d)}{d^{n-1}}.
  \end{equation*}
  In order to show that this is comparable to $N^{n-1}$, it suffices
  to note that 
  \begin{multline}
    \label{eq:21}
    \frac{6}{\pi^2} = \frac{1}{\zeta(2)} \leq \frac{1}{\zeta(n-1)} =
    \prod_{\text{all primes $p$}} \left(1-\frac{1}{p^{n-1}}\right) \\ <
    \prod_{\substack{p \vert N \\ \text{$p$ is prime}}}
    \left(1-\frac{1}{p^{n-1}}\right) = \sum_{d \vert N}
    \frac{\mu(d)}{d^{n-1}} < 1.
  \end{multline} 
  where $\zeta$ denotes the Riemann $\zeta$-function. Note that we
  have used the Euler product formula for this function in order to
  make the argument completely clear.

  In order to prove Theorem \ref{thm:lebesgue}, it suffices to prove
  that if the series $\sum h^{n-1}\psi(h)$ is divergent, then
  \begin{equation}
    \label{eq:3}
    \sum_{N=1}^\infty \sum_{x \in P_N} \psi(N) = \infty.
  \end{equation}
  In view of the above, this is
  immediate when $n \geq 3$. When $n=2$, we use the
  identity
  \begin{equation*}
    \sum_{r = 1}^N \phi(r) = \frac{3}{\pi^2}N^2 + O(N \log N),
  \end{equation*}
  from elementary number theory.  We split the sum \eqref{eq:3} into
  dyadic blocks to get
  \begin{alignat*}{2}
    \sum_{N=1}^\infty \sum_{x \in P_N} \psi(N) &= \sum_{k=0}^\infty
    \psi(2^{k+1}) \sum_{2^k \leq r < 2^{k+1}} \phi(r)\\
    &= \sum_{k=0}^\infty \psi(2^{k+1}) \left(\frac{9}{2 \pi^2}
      2^{2(k+1)} + O(k 2^k)\right) = \infty.
  \end{alignat*}
  The final equality follows by condensation and assumption of
  divergence. Hence, Lemma \ref{lem2} applies, and the theorem
  follows.
\end{proof}

We now turn our attention to Hausdorff measures. We have the following
theorem.
\begin{thm}
  \label{thm:hausdorff}
  Let $\psi : \mathbb{Z}_{\geq 0} \rightarrow \mathbb{R}_{>0}$ be
  non-increasing, let $f: \mathbb{R} \rightarrow \mathbb{R}$ be a
  dimension function such that $r \mapsto r^{-n}f(r)$ is monotonically
  increasing and such that $g : r \mapsto r^{-(n-1)}f(r)$ is also a
  dimension function. Then,
  \begin{equation*}
    \mathcal{H}^f (\mathcal{L}_{n,\ell}(\psi)) =
    \begin{cases}
      0 & \text{whenever } \sum_{r=1}^\infty
      r^{n}g\left(\frac{\psi(r)}{r}\right) < \infty, \\
      \infty & \text{whenever } \sum_{r=1}^\infty
      r^{n}g\left(\frac{\psi(r)}{r}\right) = \infty.
    \end{cases}
  \end{equation*}
\end{thm}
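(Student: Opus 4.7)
The plan is to prove the two halves of the zero--infinity dichotomy separately, following the same pattern as Theorem~\ref{thm:lebesgue}.

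For the convergence half, I would run a Hausdorff--Cantelli covering argument. Inside $[0,1]^n$, the set $B(\ux) = \{\alpha : \|\ux\cdot\alpha\|\le\psi(H(\ux))\}$ consists of $\asymp H(\ux)$ parallel affine strips of Euclidean thickness $\psi(H(\ux))/H(\ux)$ around the hyperplanes $\ux\cdot\alpha\in\mathbb{Z}$. Each such strip can be covered by $\asymp (H(\ux)/\psi(H(\ux)))^{n-1}$ balls of radius $\psi(H(\ux))/H(\ux)$, and therefore contributes $f$-content $\asymp g(\psi(H(\ux))/H(\ux))$; the $f$-content of $B(\ux)$ as a whole is $\asymp H(\ux)\,g(\psi(H(\ux))/H(\ux))$. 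Summing over the $\asymp r^{n-1}$ admissible $\ux$ with $H(\ux) = r$ and then over $r\ge R$ produces a tail $\sum_{r\ge R}r^{n}g(\psi(r)/r)$ that vanishes by hypothesis, and Hausdorff--Cantelli yields $\mathcal{H}^f(\mathcal{L}_{n,\ell}(\psi))=0$.

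For the divergence half, my plan is to invoke a Mass Transference Principle (MTP) for neighbourhoods of hyperplanes, of the type developed by Beresnevich--Velani. I would define an auxiliary approximating function $\widetilde\psi(r) = r\,g(\psi(r)/r)$, so that $\sum r^{n-1}\widetilde\psi(r) = \sum r^{n}g(\psi(r)/r) = \infty$; Theorem~\ref{thm:lebesgue} then delivers full Lebesgue measure for $\mathcal{L}_{n,\ell}(\widetilde\psi)$. Because $\widetilde\psi(r)/r = g(\psi(r)/r)$, the Euclidean thicknesses of the hyperplane neighbourhoods realising $\mathcal{L}_{n,\ell}(\widetilde\psi)$ are exactly the $g$-inflations of the target thicknesses for $\mathcal{L}_{n,\ell}(\psi)$, which is precisely the scaling relation required by the hyperplane MTP. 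Applying the principle upgrades the Lebesgue full-measure statement to $\mathcal{H}^f(\mathcal{L}_{n,\ell}(\psi))=\infty$. The two regularity conditions in the statement --- monotonicity of $r\mapsto r^{-n}f(r)$ and the demand that $g$ itself be a dimension function --- are exactly what is needed to fit the MTP framework.

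The hard part is verifying the ubiquity hypothesis of the hyperplane MTP in the restricted-cone setting \eqref{eq:4}. The M\"obius inversion count $\#P_N \asymp \ell\,N^{n-1}$ and the pairwise linear independence of distinct elements of $\bigcup_N P_N$, both established in the proof of Theorem~\ref{thm:lebesgue}, should furnish this ubiquity because they produce enough pairwise transverse hyperplanes at every scale to cover $[0,1]^n$ uniformly up to a set of small Lebesgue measure. Once the ubiquity property is certified in this restricted setting, the hyperplane MTP closes out the divergence half of the dichotomy.
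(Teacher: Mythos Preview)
Your proposal is correct and follows the same overall approach as the paper: a Hausdorff--Cantelli covering for the convergence half, and Beresnevich--Velani machinery built on Theorem~\ref{thm:lebesgue} for the divergence half. The one difference is in execution of the divergence step: rather than verifying a ubiquity hypothesis, the paper applies Theorem~3 of \cite{MR2264714} (the slicing form) directly, the only extra input being the existence of a fixed line making uniformly bounded angle with every hyperplane in the restricted family --- and the cone condition on $\ux$ supplies exactly this, with $V=\spn\{(0,\dots,0,1)\}$. So what you flag as the hard part is bypassed entirely.
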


\begin{proof}
  To prove the convergence result, we cover each $B(x_1, \dots, x_n)$
  by no more than some constant times $H(\ux)^n \psi(H(\ux))^{-(n-1)}$
  balls of width $\asymp \psi(H(\ux))/H(\ux)$. Using this cover to
  bound the Hausdorff $f$-measure, we get for any $N$,
  \begin{alignat*}{2}
    \mathcal{H}^f (\mathcal{L}_{n,\ell}(\psi)) &\ll \sum_{r \geq N}
    \sum_{H(\ux) = r} f\left(\frac{\psi(r)}{r}\right) r^n
    \psi(r)^{-(n-1)}  \\
    &\ll \sum_{r \geq N} r^n \left(\frac{\psi(r)}{r}\right)^{-(n-1)}
    f\left(\frac{\psi(r)}{r}\right) \\
    &= \sum_{r \geq N} r^n g\left(\frac{\psi(r)}{r}\right) \rightarrow
    0.
  \end{alignat*}

  To get the divergence case, we apply result of Beresnevich and
  Velani \cite{MR2264714}, which combines the Hausdorff and Lebesgue
  theory for lim sup sets of the type considered here in one package.
  With reference to their setup, we let $\mathcal{R}$ be the
  collection of hyperplanes in $\mathbb{R}^n$ given by the equations
  \begin{equation*}
    R_{(x_1, \dots,x_n,y)} = \left\{\alpha \in \mathbb{R}^n : x_1
      \alpha_1 + \cdots + x_n \alpha_n = y\right\},
  \end{equation*}
  where $(x_1, \dots, x_n) \in \mathbb{Z}^n$ satisfies
  \begin{equation*}
    2\max\{\abs{x_{\ell+1}}, \dots, \abs{x_n}\} \leq \max\{\abs{x_1}, 
    \dots, \abs{x_{\ell}}\},
  \end{equation*}
  and $y \in \mathbb{Z}$. Also, let $\Upsilon(x_1, \dots, x_n, y) =
  \psi(H(\ux))/(nH(\ux))$ and let
  \begin{multline*}
    \Delta(R_{(x_1, \dots, x_n, y)}, \Upsilon(x_1, \dots, x_n, y)) \\
    = \left\{x \in \mathbb{R}^n : \dist(x, R_{x_1, \dots, x_n, y})
      \leq \Upsilon(x_1, \dots, x_n, y)\right\}.
  \end{multline*}
  It is an easy exercise to show that
  \begin{equation*}
    \limsup \Delta(R_{(x_1, \dots, x_n, y)}, \Upsilon(x_1, \dots, x_n,
    y)) \subseteq \limsup  B(x_1, \dots, x_n).
  \end{equation*}

  In order to invoke the main result of \cite{MR2264714}, we need a
  line in $\mathbb{R}^n$, such that the angle between the hyperplanes
  in $\mathcal{R}$ and the line is bounded away from zero. Note that
  the line $V = \spn\{(0, \dots, 0, 1)\}$ has this property. It now
  follows from \cite[Theorem 3]{MR2264714} together with Theorem
  \ref{thm:lebesgue} that the divergence part holds.
\end{proof}

Note that the same result holds for $\ell = n$, so the above result
contains the classical result of Jarn\'ik \cite{MR1545226} and its
extension to arbitrary Hausdorff measure in \cite{MR1468922}, where
the more general problem of systems of linear forms is considered. In
addition, the result shows that the metrical theory is indifferent to
restrictions of the form studied in this paper. As a consequence of
Theorem \ref{thm:hausdorff}, we see that the dimension result valid
for exact order sets \cite{MR1866488} in the classical case remains
valid under mild restrictions.

\begin{cor}
  \label{cor:HDim}
  Let $\mu > n$. Then,
  \begin{equation*}
    \dim\{\alpha \in \mathbb{R}^n : \mu_{n,\ell}(\alpha) = \mu\} =
    n-1+\frac{n+1}{\mu+1}.
  \end{equation*}
  In particular, the exponent $\mu_{n,\ell}$ attains all values between
  $n$ and $\infty$.
\end{cor}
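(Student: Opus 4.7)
My plan is to sandwich the exact-order set $E_\mu := \{\alpha \in \mathbb{R}^n : \mu_{n,\ell}(\alpha) = \mu\}$ between two families of $\mathcal{L}_{n,\ell}$ sets and apply Theorem \ref{thm:hausdorff} at the critical gauge $f(r) = r^s$, where $s := n-1 + (n+1)/(\mu+1)$. Writing $\psi_t(h) := h^{-t}$, the definition of $\mu_{n,\ell}$ yields the two-sided inclusion
\[
\mathcal{L}_{n,\ell}(\psi_\mu) \setminus \bigcup_{k \geq 1} \mathcal{L}_{n,\ell}(\psi_{\mu + 1/k}) \;\subseteq\; E_\mu \;\subseteq\; \bigcap_{\mu' < \mu} \mathcal{L}_{n,\ell}(\psi_{\mu'}),
\]
and the upper and lower bounds on $\dim E_\mu$ will be extracted from the right- and left-hand sides respectively.

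For the upper bound I would fix $s' > s$, pick $\mu' < \mu$ close enough to $\mu$ that $s' > n-1 + (n+1)/(\mu'+1)$ still holds, and feed $\psi_{\mu'}$ and $r \mapsto r^{s'}$ into the convergence half of Theorem \ref{thm:hausdorff}. The test series collapses to $\sum_r r^{n - (\mu'+1)(s'-n+1)}$, whose exponent is strictly less than $-1$, so $\mathcal{H}^{s'}(\mathcal{L}_{n,\ell}(\psi_{\mu'})) = 0$ and a fortiori $\dim E_\mu \leq s'$. Letting $s' \searrow s$ closes the upper bound.

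The lower bound is the substantive half. With $\psi = \psi_\mu$ and $f(r) = r^s$ the relevant series becomes exactly the harmonic series $\sum_r r^{-1}$, so the divergence half of Theorem \ref{thm:hausdorff} gives $\mathcal{H}^s(\mathcal{L}_{n,\ell}(\psi_\mu)) = \infty$. Replacing $\mu$ by $\mu + 1/k$ in the same computation produces the exponent $-1 - (n+1)/(k(\mu+1)) < -1$, so the series converges and $\mathcal{H}^s(\mathcal{L}_{n,\ell}(\psi_{\mu + 1/k})) = 0$ for every $k \geq 1$. Countable subadditivity turns the subtracted union into a null set, and removing a null set from a set of infinite $\mathcal{H}^s$-measure preserves infinite $\mathcal{H}^s$-measure. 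Hence $\dim E_\mu \geq s$.

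The step I expect to need the most bookkeeping is checking that the gauge $f(r) = r^s$ formally satisfies the hypotheses of Theorem \ref{thm:hausdorff}. Because $\mu > n$ forces $n-1 < s < n$, the auxiliary $g(r) = r^{s-n+1}$ is automatically a dimension function, but the monotonicity condition on $r \mapsto r^{-n}f(r)$ must be read in the gauge-theoretic sense (increase as $r \to 0^+$), and one should confirm that the Beresnevich--Velani input invoked in the divergence half still applies in the range $s \in (n-1, n)$. Once this technicality is cleared, the two chains of inequalities pin $\dim E_\mu$ at the stated value, and the ``in particular'' statement follows at once: as $\mu$ ranges over $(n, \infty)$ the dimension $s$ sweeps $(n-1, n)$, so in particular each $E_\mu$ is nonempty, while the endpoint $\mu = n$ is witnessed by the Lebesgue-almost-every $\alpha$ produced by Theorem \ref{thm:main1}.
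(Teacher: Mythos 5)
Your argument is correct and takes essentially the same route as the paper: apply Theorem \ref{thm:hausdorff} with the critical gauge $f(r)=r^{s}$, $s=n-1+\frac{n+1}{\mu+1}$, to get $\mathcal{H}^{s}\bigl(\mathcal{L}_{n,\ell}(r^{-\mu})\bigr)=\infty$ while the slightly faster approximating functions give $\mathcal{H}^{s}$-null sets, and sandwich the exact-order set in the resulting difference. The only cosmetic differences are that the paper removes the single set $\mathcal{L}_{n,\ell}\bigl(r^{-\mu}\log^{-2}r\bigr)$ rather than your countable union $\bigcup_{k}\mathcal{L}_{n,\ell}\bigl(r^{-\mu-1/k}\bigr)$, and it leaves the matching upper bound implicit in the convergence half (equivalently Rynne's result), which you spell out explicitly.
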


This result is an exact order version of a previous result of Rynne
\cite{MR93a:11066}, who calculated the Hausdorff dimension of sets of
vectors for which the Diophantine exponent obtained by restricting the
$\ux$ to arbitrary subsets of $\mathbb{Z}^n$ is upper bounded. In our
setting, Rynne's result would give
\begin{equation*}
  \dim\{\alpha \in \mathbb{R}^n : \mu_{n,\ell}(\alpha) \leq \mu\} =
  n-1+\frac{n+1}{\mu+1}.
\end{equation*}
Clearly, the present result is stronger in the present setup, although
Rynne's result is applicable to a wider class of restrictions.

\begin{proof}
  Let $\psi(r) = r^{-\mu}$ and $\psi_0(r) = r^{-\mu}/\log^2 r$. We
  consider the set
  \begin{equation*}
    \mathcal{L}_{n,\ell}(\psi) \setminus \mathcal{L}_{n,\ell}(\psi_0).
  \end{equation*}
  This set is certainly contained in the set of the corollary. We show
  that the dimension of this set satisfies the corresponding lower
  bound. This in turn follows if we show that for $s =
  n-1+\frac{n+1}{\mu+1}$,
  \begin{equation*}
    \mathcal{H}^s (\mathcal{L}_{n,\ell}(\psi)) = \infty \text{ and }
    \mathcal{H}^s (\mathcal{L}_{n,\ell}(\psi_0)) = 0.
  \end{equation*}
  But this follows from Theorem \ref{thm:hausdorff}, since on
  inserting all definitions and reducing,
  \begin{equation*}
    \sum_{r=1}^\infty r^{n}g\left(\frac{\psi(r)}{r}\right) =
    \sum_{r=1}^\infty \frac{1}{r} = \infty,
  \end{equation*}
  whereas
  \begin{equation*}
    \sum_{r=1}^\infty r^{n}g\left(\frac{\psi_0(r)}{r}\right) =
    \sum_{r=1}^\infty \frac{1}{r \log^2 r} < \infty.
  \end{equation*}
  This completes the proof of Corollary \ref{cor:HDim}.
\end{proof}

\section{Small values of the exponents $\mu_{n, \ell}$}
\label{sec:small-expon}

As noted just above Theorem \ref{thm:main1},
Schmidt \cite{MR0429762} proved that for any positive
$\epsilon$ and any integer $n \ge 2$
there are $n$-tuples $\alpha$ such that $\mu_{n, 1}
(\alpha) \le 2 + \epsilon$. His proof can be easily modified to
assert the existence of $\alpha$ with
\begin{equation}
  \label{eq:6}
  \mu_{n, 1} (\alpha) \le 2.
\end{equation}

The purpose of the present section is to prove something more, namely
the following theorem.

\begin{thm}
  \label{thm:small_exponents}
  Let $1 \leq \ell < n$ and let $\ell + 1 \leq \mu \leq n$. Then there
  are continuum many $\alpha = (\alpha_1, \dots, \alpha_n) \in
  \mathbb{R}^n$ with $1, \alpha_1, \dots, \alpha_n$ linearly
  independent over $\mathbb{Q}$ such that
  \begin{equation*}
    \mu_{n, \ell}(\alpha) = \mu.
  \end{equation*}
\end{thm}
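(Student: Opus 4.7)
The plan is an inductive Cantor-type construction, generalising Schmidt's argument for $\mu_{n,1}(\alpha)\le 2$. The lower bound is immediate: restricting the integer vectors in the definition of $\mu_{n,\ell}$ to those with $x_{\ell+1}=\cdots=x_n=0$ (for which \eqref{eq:4} is automatic whenever $(x_1,\dots,x_\ell)\ne 0$) yields
\[
\mu_{n,\ell}(\alpha)\ \ge\ w_\ell(\alpha_1,\dots,\alpha_\ell),
\]
and by the Jarn\'ik--Besicovich spectrum for linear forms in $\ell$ variables, uncountably many $\ell$-tuples $(\alpha_1,\dots,\alpha_\ell)$ realise $w_\ell(\alpha_1,\dots,\alpha_\ell)=\mu$ for any prescribed $\mu\ge\ell$. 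Fix such an $\ell$-tuple once and for all; then $\mu_{n,\ell}(\alpha)\ge\mu$ holds regardless of $(\alpha_{\ell+1},\dots,\alpha_n)$.

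For the matching upper bound I would construct the remaining coordinates $(\alpha_{\ell+1},\dots,\alpha_n)$ as the intersection of a nested sequence of closed balls $B_0\supset B_1\supset\cdots\subset\reals^{n-\ell}$ of radii $r_k\to 0$. Fix rapidly increasing $H_k\to\infty$ and tolerances $\eta_k\searrow 0$. At stage $k$, delete from $B_{k-1}$ the union of the slabs
\[
\bigl\{(\alpha_{\ell+1},\dots,\alpha_n)\in\reals^{n-\ell}:|\ux\cdot\alpha-y|\le H(\ux)^{-\mu-\eta_k}\bigr\}
\]
taken over all $\ux\in\mathcal{C}_{n,\ell}\cap\integer^n$ with $(x_{\ell+1},\dots,x_n)\ne 0$ and $H(\ux)\in(H_{k-1},H_k]$, and all $y\in\integer$; then select $B_k$ inside the remainder. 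Simultaneously, at each stage, dodge the countably many rational hyperplanes of height at most $H_k$ to secure $\mathbb{Q}$-linear independence of $1,\alpha_1,\dots,\alpha_n$ in the limit. The resulting $\alpha$ then satisfies $\mu_{n,\ell}(\alpha)\le\mu$: approximations in the cone with zero tail contribute exactly $w_\ell(\alpha_1,\dots,\alpha_\ell)=\mu$, while those with nonzero tail are excluded from some stage onward. The abundant freedom in choosing each $B_k$ inside the remainder produces $2^{\aleph_0}$ distinct limit points.

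The main obstacle is the measure estimate needed to guarantee that the dodging step leaves a nonempty remainder at every stage, i.e.\ that the total measure of bad slabs meeting $B_{k-1}$ is strictly less than $|B_{k-1}|$. The cone restriction already cuts the count of $\ux$ of height $H$ from $\asymp H^n$ to $\asymp H^{n-1}$, since the max is pinned among the first $\ell$ coordinates, but a straightforward Borel--Cantelli sum still diverges once $\mu<n$. Closing the gap down to the sharp threshold $\mu\ge\ell+1$ requires (a) exploiting the orientation of each bad slab: when $Mr_{k-1}\ll 1$, with $M=\max\{|x_{\ell+1}|,\dots,|x_n|\}$, only $O(1)$ integers $y$ contribute, giving measure $\asymp H(\ux)^{-\mu-\eta_k}r_{k-1}^{n-\ell-1}/M$ rather than $\asymp H(\ux)^{-\mu-\eta_k}$; and (b) a careful calibration of $H_k,r_k,\eta_k$ so that, iterated across stages, the shrinkage of $B_k$ keeps up with the removal of bad slabs. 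Carrying out this bookkeeping—and verifying it in the full range $\mu\in[\ell+1,n]$—is the delicate technical heart of the argument.
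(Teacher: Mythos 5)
Your lower bound (fixing $(\alpha_1,\dots,\alpha_\ell)$ with $w_\ell(\alpha_1,\dots,\alpha_\ell)=\mu$ and taking zero tails) is fine, but the core of your argument --- that at each stage the deleted slabs occupy only a proper portion of $B_{k-1}$ --- is not established, and the two refinements you sketch do not touch the dominant contribution. At stage $k$ you must delete slabs for \emph{every} cone vector with nonzero tail and $H(\ux)\in(H_{k-1},H_k]$. The overwhelming majority of these ($\asymp H^{n-1}$ at height $H$) have tail maximum $M$ comparable to $H$, so the spacing $\asymp 1/M\le 1/H_{k-1}$ of the corresponding slab family is below the radius $r_{k-1}$ of the current ball; the ``only $O(1)$ values of $y$'' saving is then unavailable, and each such vector removes a proportion $\asymp H^{-\mu-\eta_k}$ of $B_{k-1}$. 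The proportion removed at stage $k$ is therefore at least of order $\sum_{H_{k-1}<H\le H_k}H^{n-1-\mu-\eta_k}$, which is unbounded as $H_k\to\infty$ whenever $\mu<n$ --- precisely the range in which the theorem has content. Choosing $r_{k-1}$ smaller than $1/H_k$ does not rescue this: the relative thickness of any slab meeting the ball becomes $\asymp H^{-\mu-\eta_k}/(Mr_{k-1})\ge H^{-\mu-\eta_k}$, and you would now need to control how many of the $\asymp H_k^{\,n}$ slab families actually meet your particular ball, i.e.\ a discrepancy statement for the values $\ux\cdot\alpha$ modulo one adapted to the fixed head and the centre of the ball, which is exactly the hard point and is nowhere supplied. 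Worse, a fibred divergence Borel--Cantelli argument of the same type as in Theorem \ref{thm:lebesgue} shows that, for your fixed head and almost every tail, there are infinitely many cone solutions with nonzero tail at every exponent below $n$; the tails you are trying to construct form a null set, so no bookkeeping by measure alone can certify a nonempty remainder. What you call ``the delicate technical heart'' is in fact the whole theorem, and as set up it fails rather than merely remains to be checked.

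For comparison, the paper's proof reverses the order of choices and contains the mechanism that makes the threshold $\ell+1$ (rather than $n$) attainable. The tail $(\alpha_{\ell+1},\dots,\alpha_n)$ is chosen \emph{first}, by Davenport--Schmidt, so that for every large $N$ there is $q\le N$ with $\norm{q\alpha_{\ell+i}}<e^{-N}$ for all but one index $i$; the head is then chosen from the exact-order set of the auxiliary exponent $\tilde{\nu}_{n,\ell}$, which governs forms in the $\ell$ head variables plus a \emph{single} tail variable (Lemma \ref{lem:auxiliary_exps} gives positive Hausdorff dimension, hence continuum many choices). The upper bound $\mu_{n,\ell}(\alpha)\le\mu+\epsilon$ comes from multiplying the linear form by $q$: all but one tail term becomes negligible modulo one, and the remaining $(\ell+1)$-variable form is controlled by $\tilde{\nu}_{n,\ell}$. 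This reduction from $n$ to $\ell+1$ effective variables is what permits exponents down to $\ell+1$; your plan has no analogue of it, and without one there is no reason the construction should succeed for any $\mu<n$.
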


The proof is an extension of the method employed by Schmidt to prove
\eqref{eq:6} in \cite{MR0429762}. We need a lemma from the metrical
theory of Diophantine approximations. We first define an auxiliary
Diophantine exponent.  Let $1 \leq \ell < n$ and let
$(\alpha_{\ell+1}, \dots, \alpha_n) \in \mathbb{R}^{n-\ell}$ be fixed.
We define
\begin{align*}
  \tilde{\nu}_{n, \ell}(\alpha_{1}, \dots, \alpha_\ell) = \sup
  \bigg\{\nu > 0 : &\min_{1 \leq i \leq n - \ell} \norm{x_1 \alpha_1 +
    \cdots + x_\ell
    \alpha_\ell + x_{\ell+i} \alpha_{\ell+i}} < H(\ux)^{-\nu}\\
  &\text{ for infinitely many } \ux \in \mathbb{Z}^n \\
  &\text{ with } \max\{\abs{x_1}, \dots, \abs{x_\ell}\} >
  \max\{\abs{x_{\ell+1}}, \dots, \abs{x_n}\} \bigg\}.
\end{align*}
We use a metrical result for this exponent.

\begin{lem}
  \label{lem:auxiliary_exps}
  Let $1 \leq \ell < n$ and let $(\alpha_{\ell+1}, \dots, \alpha_n)
  \in \mathbb{R}^{n-\ell}$ be fixed. Then, for $\nu \geq \ell + 1$,
  \begin{equation*}
    \dim \left\{(\alpha_{1}, \dots, \alpha_\ell) \in
      \mathbb{R}^{\ell} : \tilde{\nu}_{n, \ell}(\alpha_{1}, \dots,
      \alpha_\ell) =\nu\right\} = \ell - 1 + \frac{\ell + 2}{\nu +
      1}. 
  \end{equation*}
\end{lem}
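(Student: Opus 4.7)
The plan is to realize $\{\alpha \in \mathbb{R}^\ell : \tilde\nu_{n,\ell}(\alpha) \geq \nu\}$ as a limsup of affine hyperplane neighborhoods in $\mathbb{R}^\ell$, to prove analogues of Theorems \ref{thm:lebesgue} and \ref{thm:hausdorff} for this limsup by means of the Beresnevich--Velani framework already invoked in the proof of Theorem \ref{thm:hausdorff}, and then to extract the exact order set by the $\mathcal{L}(\psi) \setminus \mathcal{L}(\psi_0)$ argument of Corollary \ref{cor:HDim}. With $(\alpha_{\ell+1}, \dots, \alpha_n)$ fixed and $\psi(r) = r^{-\nu}$, each admissible triple $(\ux, i, y)$ -- meaning $\ux \in \mathbb{Z}^n$ satisfies the height restriction from the definition of $\tilde\nu_{n,\ell}$, $i \in \{1, \dots, n-\ell\}$, and $y \in \mathbb{Z}$ -- determines the affine hyperplane
\[
R_{\ux,i,y} = \{\alpha \in \mathbb{R}^\ell : x_1\alpha_1 + \cdots + x_\ell\alpha_\ell = y - x_{\ell+i}\alpha_{\ell+i}\},
\]
and $\tilde\nu_{n,\ell}(\alpha) \geq \nu$ is implied by $\alpha$ lying in the $\asymp \psi(H(\ux))/H(\ux)$-neighborhood of $R_{\ux,i,y}$ for infinitely many admissible triples.

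For the convergence upper bound I would count, at height $H$, the $\asymp H^{\ell-1}$ directions $(x_1, \dots, x_\ell)$ of maximum modulus $H$, the $n-\ell$ choices of $i$, the $\asymp H$ integers $x_{\ell+i}$ with $|x_{\ell+i}| < H$, and the $\asymp H$ integers $y$ for which $R_{\ux,i,y}$ meets $[0,1]^\ell$. This yields $\asymp H^{\ell+1}$ slabs at height $H$, each of thickness $\asymp H^{-(\nu+1)}$ and coverable by $\asymp H^{(\nu+1)(\ell-1)}$ balls of that radius. Summation of the Hausdorff $s$-contributions gives
\[
\mathcal{H}^s \ll \sum_H H^{\ell+1 + (\nu+1)(\ell-1-s)},
\]
which converges precisely when $s > \ell - 1 + \frac{\ell+2}{\nu+1}$, yielding the sought upper bound on $\dim\{\alpha : \tilde\nu_{n,\ell}(\alpha) \geq \nu\}$.

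For the divergence lower bound I would first establish full Lebesgue measure of the limsup set $\tilde{\mathcal{L}}(\psi) \subset [0,1]^\ell$ of $\alpha$ lying in one of the slabs infinitely often whenever $\sum_H H^\ell \psi(H) = \infty$; the quasi-independence step \eqref{eq:2}, the M\"obius-sieve restriction to $\ux$ with $\gcd(x_1, \dots, x_\ell) = 1$, and the Euler-product bound \eqref{eq:21} all transfer to the present setting once one passes to pairwise non-parallel hyperplanes. Theorem 3 of Beresnevich--Velani \cite{MR2264714}, applied to the family $\{R_{\ux,i,y}\}$ exactly as in the proof of Theorem \ref{thm:hausdorff}, with a transverse line chosen after decomposing into subfamilies according to which coordinate of $(x_1, \dots, x_\ell)$ realizes $H(\ux)$, then upgrades the Lebesgue statement to the divergence part of the Hausdorff analogue of Theorem \ref{thm:hausdorff}. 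Finally, the exact order set $\{\tilde\nu_{n,\ell} = \nu\}$ is extracted as in Corollary \ref{cor:HDim} from the difference of the $\psi$- and $\psi_0$-limsups for $\psi(r) = r^{-\nu}$ and $\psi_0(r) = r^{-\nu}/\log^2 r$.

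The main obstacle is verifying that distinct admissible triples produce essentially distinct hyperplanes, so that the Borel--Cantelli quasi-independence calculation of Theorem \ref{thm:lebesgue} survives intact. Two triples $(\ux, i, y)$ and $(\ux', i', y')$ with the same direction $(x_1, \dots, x_\ell)$ define the same hyperplane iff $y - y' = x_{\ell+i}\alpha_{\ell+i} - x'_{\ell+i'}\alpha_{\ell+i'}$; the $\mathbb{Q}$-linear independence of $1, \alpha_{\ell+1}, \dots, \alpha_n$ forces $(i, x_{\ell+i}, y) = (i', x'_{\ell+i'}, y')$ outside the degenerate case $x_{\ell+i} = x'_{\ell+i'} = 0$, which contributes only a lower-order subfamily that may be absorbed into the error constants.
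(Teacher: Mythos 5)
Your overall architecture is the same as the paper's: slabs around the affine hyperplanes $R_{\ux,i,y}$, with the fixed numbers $\alpha_{\ell+i}$ entering as inhomogeneous offsets; a covering argument for the convergence bound; Borel--Cantelli plus the slicing result of \cite{MR2264714} (Mass Transference Principle when $\ell=1$) for divergence; and extraction of the exact-order set from a difference of two limsup sets. The convergence count is fine. The genuine gap is in the divergence half, exactly at the point you call the ``main obstacle'' and then dismiss. What Lemma \ref{lem2} needs is pairwise (quasi-)independence of the slabs, and the problematic pairs are the \emph{parallel but distinct} ones, i.e.\ triples sharing the direction $(x_1,\dots,x_\ell)$ but having different $(i,x_{\ell+i})$: for such a pair the relative offset $x_{\ell+i}\alpha_{\ell+i}-x'_{\ell+i'}\alpha_{\ell+i'}$ can be arbitrarily close to an integer, so the two slabs may overlap in a set of measure comparable to the thickness $\psi(H)/H$ rather than to the product of the measures. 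The $\mathbb{Q}$-linear independence of $1,\alpha_{\ell+1},\dots,\alpha_n$ only shows the hyperplanes are not identical, which is not enough; what is required is an averaged overlap estimate over all such pairs. Your proposed remedy, ``passing to pairwise non-parallel hyperplanes'', destroys the very count you need: keeping only one value of $(i,x_{\ell+i})$ per direction leaves $\asymp H^{\ell-1}$ families at height $H$, the divergence sum drops from $\sum H^{\ell}\psi(H)$ to $\sum H^{\ell-1}\psi(H)$, and the resulting dimension is $\ell-1+(\ell+1)/(\nu+1)$; the extra variable $x_{\ell+i}$, that is, precisely the parallel translates, is where the ``$+2$'' comes from. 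The paper handles this by restricting to vectors with $\gcd(x_1,\dots,x_\ell,x_{\ell+1})=1$ and $x_{\ell+1}$ prime, $x_{\ell+1}\le N/2$, so that parallel pairs have coprime last entries, and then invoking one-dimensional inhomogeneous Khintchine-type overlap estimates to get the quasi-independence \eqref{eq:11}; the prime restriction costs a logarithm (via the Prime Number Theorem), which is why the Lebesgue divergence condition obtained there is \eqref{eq:12} rather than your unproven $\sum_H H^{\ell}\psi(H)=\infty$, and why the paper later works with $\psi(h)=h^{-\nu}\log h$ in the final dimension computation.

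A smaller point: your choice $\psi_0(r)=r^{-\nu}/\log^2 r$ does not always give $\mathcal{H}^s$-nullity at $s=\ell-1+(\ell+2)/(\nu+1)$; the convergence sum becomes $\sum_r r^{-1}(\log r)^{-2(\ell+2)/(\nu+1)}$, which diverges once $\nu\ge 2\ell+3$. Take instead $\psi_0(r)=r^{-\nu}(\log r)^{-2(\nu+1)/(\ell+2)}$, as the paper does.
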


\begin{proof}
  Let $\psi: \mathbb{Z}_{\geq 0} \rightarrow \mathbb{R}_{>0}$ be
  non-increasing.  Note first that
  \begin{align*}
    \mathcal{E}(\psi) &= \bigg\{(\alpha_1, \dots, \alpha_\ell) \in
    \mathbb{R}^{\ell} : \min_{1 \leq i \leq n - \ell} \norm{x_1
      \alpha_1 + \cdots + x_\ell
      \alpha_\ell + x_{\ell+i} \alpha_{\ell+i}} < \psi(H(\ux))\\
    & \quad \quad \text{for infinitely many } \ux \in \mathbb{Z}^n \\
    & \quad \quad \text{with } \max\{\abs{x_1}, \dots, \abs{x_\ell}\}
    >
    \max\{\abs{x_{\ell+1}}, \dots, \abs{x_n}\} \bigg\} \\
    &\subseteq \bigcup_{1 \leq i \leq n-\ell} \bigg\{(\alpha_1, \dots,
    \alpha_\ell) \in \mathbb{R}^{\ell} : \norm{x_1 \alpha_1 + \cdots +
      x_\ell
      \alpha_\ell + x_{\ell+i} \alpha_{\ell+i}} < \psi(H(\ux))\\
    & \quad\quad \text{ for infinitely many } \ux \in \mathbb{Z}^n,
    \text{ with } \max\{\abs{x_1}, \dots, \abs{x_{\ell}}\} >
    \abs{x_{\ell+i}}  \bigg\} \\
    & = \bigcup_{1 \leq i \leq n-\ell} \mathcal{E}_i(\psi),
  \end{align*}
  where $\mathcal{E}_i(\psi)$ is defined by the last equality.
  Furthermore, as the minimum in the definition of $\mathcal{E}$ can
  only be attained for finitely many values of $i$, there exists
  $i_0$ such that $1 \leq i_0 \leq n-\ell$ and
  $\mathcal{E}_{i_0}(\psi) \subseteq \mathcal{E}(\psi)$. The upshot is
  that
  \begin{equation*}
    \min_{1 \leq i \leq n-\ell} \dim (\mathcal{E}_i(\psi)) \leq \dim
    \mathcal{E}(\psi) \leq \max_{1 \leq i \leq n-\ell} \dim
    (\mathcal{E}_i(\psi)).
  \end{equation*}
  We calculate the dimension of a generic $\mathcal{E}_i$,
  say of $\mathcal{E}_1$.

  As in the proof of Theorem \ref{thm:lebesgue}, we restrict ourselves
  to the unit cube and consider the set $\mathcal{E}^* = \mathcal{E}_1
  \cap [0,1]^\ell$.  In analogy with the proof of Theorem
  \ref{thm:lebesgue}, let
  \begin{multline*}
    B(x_1, \dots, x_\ell, x_{\ell+1}) \\
    = \left\{(\alpha_1, \dots, \alpha_\ell) \in [0,1]^\ell :
      \norm{x_1\alpha_1 + \cdots + x_\ell \alpha_\ell + x_{\ell+1}
        \alpha_{\ell+1}} \leq \psi(H(\ux))\right\},
  \end{multline*}
  As in that proof, we find that
  \begin{equation}
    \label{eq:8}
    \abs{B(x_1, \dots, x_\ell, x_{\ell+1})} \asymp \psi(H(\ux)).
  \end{equation}
  Also, by the same argument as the one used in \cite{MR1258756}, if
  $(x_1, \dots, x_\ell)$ and $(x'_1, \dots, x'_\ell)$ are linearly
  independent, then for any $x_{\ell+1}, x'_{\ell+1}$,
  \begin{multline}
    \label{eq:10}
    \abs{B(x_1, \dots, x_\ell, x_{\ell+1}) \cap B(x'_1, \dots,
      x'_\ell, x'_{\ell+1})} \\
    =\abs{B(x_1, \dots, x_\ell, x_{\ell+1})}\cdot \abs{B(x'_1, \dots,
      x'_\ell, x'_{\ell+1})}.
  \end{multline}
  Finally, standard arguments from the proof of the one-dimensional
  Khintchine's Theorem (see \emph{e.g.} \cite{MR0087708}) show that if
  $(x_1, \dots, x_\ell)$ and $(x'_1, \dots, x'_\ell)$ are linearly
  dependent, and $(x_{\ell+1}, x'_{\ell+1}) = 1$, then
  \begin{multline}
    \label{eq:11}
    \abs{B(x_1, \dots, x_\ell, x_{\ell+1}) \cap B(x'_1, \dots,
      x'_\ell, x'_{\ell+1})} \\
    \ll \abs{B(x_1, \dots, x_\ell, x_{\ell+1})}\cdot \abs{B(x'_1, \dots,
      x'_\ell, x'_{\ell+1})}.
  \end{multline}
    
  Now, let
  \begin{align*}
    P_N = \big\{(x_1, \dots, x_\ell, x_{\ell+1}) \in
    \mathbb{Z}_{\geq0}^{\ell+1} \ : \ &H(x) =
    N, \\
    &\gcd(x_1, \dots, x_\ell, x_{\ell+1}) = 1 \text{ and } \\
    & x_{\ell+1} \text{ is prime with } x_{\ell+1} \leq N/2 \big\}.
  \end{align*}
  Let $\pi(x)$ denote the prime counting function, \emph{i.e.},
  \begin{equation*}
    \pi(x) = \left\{p \leq x : p \text{ is prime}\right\}.
  \end{equation*}
  Arguing again as in the proof of Theorem \ref{thm:lebesgue}, we find
  that
  \begin{equation*}
    \# P_N \asymp \sum_{d \vert N} \mu(d)
    \left(\frac{N}{d}\right)^{\ell-1} \pi(N/(2d)) \asymp \sum_{d \vert
      N} \mu(d) \left(\frac{N}{d}\right)^{\ell}
    \frac{1}{\log(N/(2d))}. 
  \end{equation*}
  The last asymptotic equality comes from the Prime Number Theorem. It
  is straightforward to check that if $\ux, \ux' \in \cup_{N \geq N_0}
  P_N$, then either \eqref{eq:10} or \eqref{eq:11} holds.

  If $\ell > 1$, as before by \eqref{eq:21}
  \begin{equation}
    \label{eq:9}
    \#P_N \gg \frac{N^\ell}{\log N} \sum_{d \vert N}  \mu(d)
    \left(\frac{1}{d}\right)^{\ell} \gg \frac{N^{\ell}}{\log N},
  \end{equation}
  and we find from usual arguments that
  \begin{equation*}
    \abs{\mathcal{E}^*} = 1
  \end{equation*}
  whenever
  \begin{equation}
    \label{eq:12}
    \sum_{h=1}^\infty \frac{h^\ell}{\log h} \psi(h) = \infty.
  \end{equation}
  When $\ell = 1$, the same conclusion is ensured by summing
  \eqref{eq:12} over dyadic blocks exactly as in the proof of Theorem
  \ref{thm:lebesgue}.

  On the other hand, it is a straigthforward consequence of \eqref{eq:8}
  and the Borel--Cantelli Lemma that
  \begin{equation*}
    \abs{\mathcal{E}^*} = 0
  \end{equation*}
  whenever
  \begin{equation}
    \label{eq:14}
    \sum_{h=1}^\infty h^\ell \psi(h) < \infty.
  \end{equation}

  As in the proof of Theorem \ref{thm:hausdorff}, we obtain an
  analogous Hausdorff measure result by invoking the slicing technique
  of \cite{MR2264714}. In the case $\ell = 1$, we use the
  one-dimensional version, known as the Mass Transference Principle
  from \cite{MR2259250}.

  Let $f: \mathbb{R} \rightarrow \mathbb{R}$ is a dimension function
  with $r \mapsto r^{-\ell} f(r)$ monotonically increasing and such
  that $g(r) = r^{-(\ell - 1)} f(r)$ is also a dimension function. We
  proceed to get upper and lower bounds on the Hausdorff $f$-measure
  of $\mathcal{E}^*$.

  The covering argument from the proof of Theorem \ref{thm:hausdorff}
  gives that
  \begin{equation*}
    \mathcal{H}^f(\mathcal{E}^*) = 0, 
  \end{equation*}
  whenever
  \begin{equation}
    \label{eq:15}
    \sum_{h=1}^\infty h^{\ell+1} g\left(\frac{\psi(h)}{h}\right) <
    \infty. 
  \end{equation}
  For the divergence case, an argument similar to that of the proof of
  Theorem \ref{thm:hausdorff} gives that 
  \begin{equation*}
    \mathcal{H}^f(\mathcal{E}^*(\psi)) = \infty, 
  \end{equation*}
  whenever
  \begin{equation}
    \label{eq:16}
    \sum_{h=1}^\infty h^{\ell+1} g\left(\frac{\psi(h)}{h\log h}\right) =
    \infty,
  \end{equation}
  where we have used the divergence condition \eqref{eq:12}.

  Now, consider the dimension function $f(r) = r^s$, where $s = \ell -
  1 + (l+2)/(\nu+1)$. We immediately see that for $\psi(h) =
  h^{-\nu}\log h$,
  \begin{equation*}
    \sum_{h=1}^\infty h^{\ell+1} g\left(\frac{\psi(h)}{h\log h}\right) =
    \infty = \sum_{h=1}^\infty \frac{1}{h} = \infty,
  \end{equation*}
  so that by \eqref{eq:16}, $\mathcal{H}^s(\mathcal{E}^*(\psi)) =
  \infty$. On the other hand, letting
  \begin{equation*}
    \psi_0(r) = h^{-\nu}(\log h)^{-2(\nu+1)/(\ell+2)},
  \end{equation*}
  we have,
  \begin{equation*}
    \sum_{h=1}^\infty h^{\ell+1} g\left(\frac{\psi(h)}{h}\right) =
    \sum_{h=1}^\infty \frac{1}{h(\log h)^2} < \infty,
  \end{equation*}
  so that by \eqref{eq:15}, $\mathcal{H}^s(\mathcal{E}^*(\psi_0)) =
  0$.  Plainly, the set we are estimating is a subset of
  $\mathcal{E}^*(\psi) \setminus \mathcal{E}^*(\psi_0)$, so it has the
  required dimension.
\end{proof}

Note that the proof of Lemma \ref{lem:auxiliary_exps} contains a
result somewhat weaker than the zero-one law of Theorem
\ref{thm:lebesgue}. Indeed, there is a gap between the series required
for the measure zero and the measure one case. We have no doubt that
this gap can be closed, and that the logarithmic factors in
\eqref{eq:12} and \ref{eq:16} can be removed. Nonetheless, we do not
consider the exponent defined here to be of enough interest on its own
to warrant a more detailed calculation. Furthermore, the present
result is sufficient for the purposes of this paper.

\begin{proof}[Proof of Theorem \ref{thm:small_exponents}]
  As in \cite{MR0429762}, we take $\alpha_{\ell+1}, \dots, \alpha_n
  \in \mathbb{R}$ with $1, \alpha_{\ell+1}, \dots, \alpha_n$ linearly
  independent over $\mathbb{Q}$ and such that for every $N$ large
  enough, there is an integer $q$ with $1 \leq q \leq N$ and
  \begin{equation}
    \label{eq:7}
    \norm{q \alpha_{\ell+i}} < e^{-N},
  \end{equation}
  with the possible exception of one value of $i$, say $i_0 = i_0(N)$.
  This is possible by Theorem 2 of \cite{MR0225728}.

  With $\alpha_{\ell + 1}, \dots, \alpha_n$ fixed, we take $\alpha_1,
  \dots, \alpha_\ell$ such that $1, \alpha_1, \dots, \alpha_n$ are
  linearly independent over $\mathbb{Q}$ and such that
  \begin{equation*}
    \tilde{\nu}_{n, \ell}(\alpha_{1}, \dots, \alpha_\ell) = \mu.
  \end{equation*}
  This is possible by Lemma \ref{lem:auxiliary_exps}.  Let $\epsilon$
  be a positive real number. Then,
  \begin{equation*}
    \norm{y_1 \alpha_1+ \cdots + y_{\ell} \alpha_\ell + y_{\ell+i}
      \alpha_{\ell+i}} > H(\uy)^{-\mu - \epsilon/3}
  \end{equation*}
  holds for any choice of integers $y_1, \dots, y_\ell, y_{\ell+i}$
  and any $i = 1, \dots, n-\ell$, if $\max\{\abs{y_1}, \allowbreak
  \dots, \abs{y_{\ell}}, \abs{y_{\ell+i}}\}$ is large enough.

  We show that
  \begin{equation*}
    \norm{x_1 \alpha_1 + \cdots + x_n\alpha_n} > H(\ux)^{-\mu - \epsilon}
  \end{equation*} 
  whenever $H(\ux)$ is large and $\ux$ is in the appropriate range.
  This immediately implies that $\mu_{n, \ell}(\alpha) \leq \mu +
  \epsilon$. Let $N = [\log H(\ux)]^2$ and choose an integer $q$ such
  that \eqref{eq:7} holds for all but one $i$. Suppose without loss of
  generality that $i_0(N) = 1$. Arguing in analogy with
  \cite{MR0429762}, recalling that $H(\ux)$ is attained among the
  first $\ell$ coordinates of $\ux$, we get
  \begin{align*}
    \norm{x_1 \alpha_1 + \cdots + x_n\alpha_n} & \geq q^{-1} \norm{x_1 q
      \alpha_1 + \cdots x_n q\alpha_n} \\
    & \geq q^{-1} \big(\norm{x_1 q \alpha_1 + \cdots + x_\ell q
      \alpha_{\ell} + x_{\ell+1} q
      \alpha_{\ell+1}} \\
    & \quad \quad \quad - H(\ux)\left(\norm{q
        \alpha_{\ell+2}} + \cdots + \norm{q \alpha_n}\right)\big) \\
    & > q^{-1} \left((q H(\ux))^{-\mu - \epsilon/3} -
      (n-\ell-1) H(\ux) e^{-N}\right)  \\
    & > q^{-1} \left(H(\ux)^{-\mu - (2\epsilon/3)} - (n-\ell -1) H(\ux)
      e^{-[\log H(\ux)]^2}\right) \\
    & > H(\ux)^{-\mu - \epsilon},
  \end{align*}
  when $H(\ux)$ is large enough.
  
  On the other hand, it is clear from the definition
  of the exponent $\tilde{\nu}_{n, \ell}$ that, for $\alpha$ chosen as above,
  \begin{equation*}
    \mu_{n,\ell}(\alpha) \geq \tilde{\nu}_{n, \ell}(\alpha_{1},
    \dots, \alpha_\ell) = \mu. 
  \end{equation*}
  Since $\epsilon$ is arbitrary, this gives the result.

  Since there are continuum many choices for $\alpha_{1}, \dots,
  \alpha_\ell$ by Lemma \ref{lem:auxiliary_exps}, we have completed
  the proof.  
\end{proof}

We conclude this section by assembling all the pieces required for a
proof of Theorem \ref{thm:main1}.

\begin{proof}[Proof of Theorem \ref{thm:main1}]
  The first part is an immediate consequence of Theorem
  \ref{thm:lebesgue}. The second part follows from Corollary
  \ref{cor:HDim} when $\mu_\ell \geq n$ and from Theorem
  \ref{thm:small_exponents} when $\mu_{\ell} \in [\ell+1, n)$.
\end{proof}

\section{On the difference between $\mu_{n, \ell}$ and $w_n$}
\label{sec:diff-betw-mu_n}

We now turn to the proof of Theorem \ref{thm:main2}. It depends on
earlier work by Bugeaud \cite{MR2007546}, which is related to
Schmidt's proof of the existence of $T$-numbers \cite{MR0279043,
  MR0344204}. In order to set the scene for the argument, we give some
background on these numbers. For additional details, the reader is
referred to \cite{MR2136100}.

In his 1932 classification of real numbers, Mahler
\cite{mahler32:_zur_approx_expon_logar} introduced for each positive integer
$n$ a Diophantine exponent for a real number $\xi$ by letting
\begin{multline*}
  w_n(\xi) = \sup\big\{w > 0 : 0 < \abs{P(\xi)} < H(P)^{-w}\\
  \text{ for infinitely many } P(X) \in \mathbb{Z}[X], \deg(P) \leq
  n\big\},
\end{multline*}
where $H(P)$ is the na\"ive height of the polynomial $P(X)$, \emph{i.e.},
the maximum of the absolute values among the coefficients of $P(X)$. 
Observe that $w_n (\xi)$ equals $w_n (\alpha)$, for the 
$n$-tuple $\alpha = (\xi^n, \ldots , \xi)$. 
A related quantity is
\begin{equation*}
  w(\xi) = \limsup_{n \rightarrow \infty} \frac{w_n(\xi)}{n}.
\end{equation*}

Using these quantities, Mahler classified the real numbers in four
classes.
\begin{itemize}
\item $\xi$ is an $A$-number if $w(\xi)=0$ (equivalently if $\xi$ is
  algebraic).
\item $\xi$ is an $S$-number if $w(\xi) < \infty$.
\item $\xi$ is a $T$-number if $w(\xi) = \infty$ but
  $w_n(\xi) < \infty$ for all $n$.
\item $\xi$ is a $U$-number if $w(\xi) = \infty$ and
  $w_n(\xi) = \infty$ for some $n$.
\end{itemize}
An elementary covering argument shows that almost all numbers are
$S$-numbers. Additionally, it is easy to see that Liouville numbers
such as $\sum 10^{-n!}$ are $U$-numbers. By contrast, it is very
difficult to prove that $T$-numbers exist. This was not accomplished
until 1970, when Schmidt showed how to construct examples
\cite{MR0279043, MR0344204} of such numbers.

In order to study the finer arithmetical properties of $T$-numbers,
and in particular to study the relation between Mahler's
classification and the related classification of Koksma
\cite{MR0000845}, Bugeaud \cite{MR2007546} refined Schmidt's
construction. In the process, the following result was obtained.

\begin{thm}[Theorem 3' of \cite{MR2007546}]
  \label{thm:bugeaud}
  Let $n \geq 3$ be an integer, let $\mu \in [0,1]$ and let $\nu >
  1$. Let $G(n) = 2n^3 + 2n^2+2n+1$ and let $\chi > G(n)$. Then there
  is a number $\lambda \in(0, 1/2)$, prime numbers $g_1, g_2, \dots$,
  with $g_1 \geq 11$ and integers $c_1, c_2 \dots$, such that for
  $\gamma_j = 2^{1/n}[g_j^{\mu}]$, the following conditions are satisfied:
  \begin{enumerate}
  \item[(I$_j$)] $g_j$ does not divide the norm of $c_j + \gamma_j$ for
    any $j$.
  \item[(II$_1$)] $\xi_1 = (c_1 + \gamma_1)/g_1 \in (1,2)$.
  \item[(II$_j$)] For any $j \geq 2$,
    \begin{equation*}
      \xi_j = \frac{c_j + \gamma_j}{g_j} \in \left(\xi_{j-1} -
        \tfrac{1}{2} g_{j-1}^{-\nu} , \xi_{j-1} +
        \tfrac{3}{4} g_{j-1}^{-\nu}\right) 
    \end{equation*}
  \item[(III$_1$)] For any algebraic number $\alpha \neq \xi_1$ of
    degree $\leq n$,
    \begin{equation*}
      \abs{\xi_1 - \alpha} \geq 2 \lambda H(\alpha)^{-\chi}.
    \end{equation*}
  \item[(III$_j$)] For any $j \geq 2$ and any algebraic number $\alpha
    \notin \{\xi_1, \dots, \xi_j\}$ of degree $\leq n$,
    \begin{equation*}
      \abs{\xi_j - \alpha} \geq \lambda H(\alpha)^{-\chi}.
    \end{equation*}
  \end{enumerate}
\end{thm}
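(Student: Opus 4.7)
The plan is to carry out an inductive construction of the sequence $(\xi_j)$, following and refining Schmidt's method for producing $T$-numbers. I would assume $g_1, \dots, g_{j-1}$ (primes), $c_1, \dots, c_{j-1}$ (integers), and hence $\xi_1, \dots, \xi_{j-1}$ satisfying (I)--(III) up to level $j-1$ are already chosen, and exhibit $g_j$ and $c_j$ satisfying the conditions at level $j$. The base case would be handled by a direct measure-theoretic argument on $(1,2)$: one fixes a sufficiently large prime $g_1 \geq 11$, notes that the numbers $(c + \gamma_1)/g_1$ partition $(1,2)$ into intervals of length $1/g_1$, and chooses $c_1$ to avoid neighborhoods of the (few) algebraic numbers of height $\leq H$ for a suitable threshold $H$, using that there are $O(H^{n+1})$ such numbers.

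For the inductive step, I would first choose the new prime $g_j$ large enough so that the target interval $I_j = (\xi_{j-1} - \tfrac12 g_{j-1}^{-\nu}, \xi_{j-1} + \tfrac34 g_{j-1}^{-\nu})$ contains many candidates of the form $(c+\gamma_j)/g_j$: consecutive candidates are spaced by $1/g_j$, so there are roughly $g_j \cdot g_{j-1}^{-\nu}$ of them in $I_j$. Requiring $g_j$ prime with $g_j \nmid \mathrm{Norm}(c_j+\gamma_j)$ removes only a vanishing proportion (a congruence condition on $c_j$), so property (I$_j$) is cheap and (II$_j$) is built in. The non-divisibility condition is there to guarantee that $g_j X - (c_j+\gamma_j)$ is, after multiplying by the conjugates of $\gamma_j$, essentially the minimal polynomial of $\xi_j$ with naive height comparable to $g_j$; without it the height could collapse and ruin the later estimates.

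The decisive step is to secure (III$_j$), i.e.\ separation from every algebraic $\alpha$ of degree $\leq n$ outside $\{\xi_1, \dots, \xi_j\}$. The plan is a union-bound: for each dyadic window $H \leq H(\alpha) < 2H$, use a Wirsing / Bugeaud--Mignotte type counting estimate to bound the number of $\alpha \in I_j$ with degree $\leq n$ by $\ll H^{n+1} \cdot |I_j|$, then forbid a ball of radius $\lambda H^{-\chi}$ around each such $\alpha$. Summing contributions in $H$ gives a total forbidden length of order $\lambda \, g_{j-1}^{-\nu} \sum_H H^{n+1-\chi}$, which, thanks to $\chi > G(n)$, is strictly smaller than the measure of the set of admissible candidates $(c+\gamma_j)/g_j$ in $I_j$ identified above, so a valid $c_j$ exists. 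The algebraic numbers $\alpha$ that may have large conjugates near $\xi_j$ and therefore inflate the effective height would be handled by a resultant inequality between the minimal polynomial of $\alpha$ and that of $\xi_j$, combined with (I$_j$), which ensures these two polynomials share no common factor and that $|\mathrm{Res}| \geq 1$.

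The main obstacle, and the reason for the large exponent $G(n) = 2n^3 + 2n^2 + 2n + 1$, is precisely this interaction between the height of $\alpha$, the heights and separations of \emph{all} its conjugates, and the way they cluster near the previously chosen $\xi_1, \dots, \xi_{j-1}$. One must show that the separation estimate is uniform across all $j$ and all prior data, which forces a polynomial-in-$n$ loss of several factors of $n$ in the exponent: one factor from the degree of the resultant, one from the number of conjugates, and one from bounding products of heights by Mahler measures via Gelfond's inequality. Making this uniformity genuinely effective, while also preserving the induction hypothesis for future $\xi_{j+1}, \xi_{j+2}, \dots$, is the delicate bookkeeping at the heart of the argument.
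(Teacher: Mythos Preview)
The paper does not actually prove this theorem: it is quoted verbatim as Theorem~3' of \cite{MR2007546} and the authors explicitly write ``We refer to the original paper \cite{MR2007546} for the proof of this theorem.'' So there is no in-paper proof to compare your proposal against; the theorem is used here only as a black box and as a template for the modified construction that proves Theorem~\ref{thm:main2}.

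That said, your outline is faithful to the Schmidt--Bugeaud method actually used in \cite{MR2007546}: an inductive choice of primes $g_j$ and integers $c_j$, the spacing $1/g_j$ of candidates inside the target interval $I_j$, the cheap congruence condition ensuring (I$_j$), and a union bound over algebraic numbers of bounded degree and height to secure (III$_j$), with resultant/Liouville-type separation handling the interaction with previously chosen $\xi_i$. One point to sharpen: the naive count ``$\ll H^{n+1}\,|I_j|$ algebraic numbers of height $\asymp H$ in $I_j$'' is only valid when $|I_j|$ is not too small relative to $H$; since $|I_j|\asymp g_{j-1}^{-\nu}$ can be extremely short compared with the relevant heights, the genuine argument in \cite{MR2007546} has to split into regimes and use the resultant inequality together with careful height bookkeeping, and this is exactly where the cubic exponent $G(n)=2n^3+2n^2+2n+1$ is incurred rather than the quadratic $n+2$ your displayed sum would suggest. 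You correctly flag this as the delicate point, but be aware that the simple measure bound you wrote is not by itself sufficient.
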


It is a modification of this theorem, which will enable us to prove
Theorem \ref{thm:main2}. Rather than giving a complete proof (which
would be quite long), we choose to outline a few explanations, based
on Theorem \ref{thm:bugeaud}. We refer to the original paper
\cite{MR2007546} for the proof of this theorem.

\begin{proof}[Proof of Theorem \ref{thm:main2}]
  We will be working with a number and the powers of it. Hence, our
  goal consists in finding real numbers $\xi$ such that $\mu_{n, \ell}
  (\xi^n, \ldots , \xi)$ takes a prescribed (large) value for $\ell =
  1, \ldots , n$. We will use a construction analogous to the one of
  Theorem \ref{thm:bugeaud}.

  Let $n$ be an integer with $n \ge 2$.  Let $\gamma$ be a real
  algebraic number of degree $n$.  The
  general approach consists in contructing inductively a rapidly
  increasing sequence $(c_j)_{j \ge 1}$ of integers and a rapidly
  increasing sequence $(g_j)_{j \ge 1}$ of prime numbers such that,
  besides various technical conditions, the sequence $(\xi_j)_{j \ge
    1}$, where $\xi_j = (c_j + \gamma)/g_j$, is rapidly convergent to
  a real number $\xi$.  We do this in ensuring that the best algebraic
  approximants to $\xi$ of degree at most $n$ belong to the sequence
  $(\xi_j)_{j \ge 1}$ and, moreover, we control the differences $|\xi
  - \xi_j|$ in terms of the height $H(\xi_j)$ of $\xi_j$, that is, the
  maximal of the absolute values of the coefficients of its minimal
  polynomial. 

  More precisely, if $\lambda$ is a sufficiently large real number,
  the construction gives that $|\xi - \xi_j| \asymp g_j^{-\lambda}$
  and the height of $\xi_j$ is exactly known in terms of $g_j$.  In
  particular, if $\lambda_1$ and $\lambda_2$ are sufficiently large
  (for technical reasons) real numbers with $\lambda_1 < \lambda_2$,
  we are able to construct $\xi$ such that $|\xi - \xi_j| \asymp
  H(\xi_j)^{-\lambda_2}$ for any $j$ (see Condition $({\rm II}_{j+1})$ in
  Theorem \ref{thm:bugeaud} with $\nu = \lambda_2$), while $|\xi -
  \theta| \gg H(\theta)^{-\lambda_1}$ for any algebraic number
  $\theta$ of degree at most $n$ which is not in the sequence
  $(\xi_j)_{j \ge 1}$ (see Condition $({\rm III}_j)$ in
  \ref{thm:bugeaud} with $\chi = \lambda_1$).

  Actually, the construction of \cite{MR2007546} is flexible enough to
  give even more.  Take $\lambda, \lambda_1, \ldots, \lambda_n$ real
  numbers with $\lambda \le \lambda_1 \le \ldots \le \lambda_n$.  
  For $k= 1, \ldots , n$, we 
  are able indeed to construct $\xi$ such that $|\xi - \xi_j| \asymp
  H(\xi_j)^{-\lambda_k}$ for any $j$ congruent to $k$ modulo $n$,
  while $|\xi - \theta| \ge H(\theta)^{-\lambda}$ for any algebraic
  number $\theta$ of degree at most $n$ which is not in the sequence
  $(\xi_j)_{j \ge 1}$.

  Since we are here concerned with linear forms in $1, \xi, \ldots ,
  \xi^n$, we are not interested in the differences $|\xi - \theta|$,
  but merely in the values taken at $\xi$ by integer polynomials of
  degree at most $n$.  Denote by $P(X)$ the minimal defining
  polynomial of $\gamma$.  Then, provided that $g_j$ does not divide
  the norm of $c_j + \gamma$ (see Condition $({\rm I}_j)$ of Theorem
  \ref{thm:bugeaud}), the integer polynomial $Q_j (X) = P(g_j X -
  c_j)$ is the minimal defining polynomial of $\xi_j$. The
  construction allows us to control precisely the smallness of $|Q_j
  (\xi)|$, and to prove that $|Q(\xi)|$ is not too small when $Q(X)$
  is not a multiple of some polynomial $Q_j(X)$.

  Another important feature of this construction is that we do not
  have to use the same algebraic number $\gamma$ at each step $j$ of
  the process. Instead, we can work with a given sequence
  $(\gamma_j)_{j \ge 1}$ of real algebraic numbers of degree at most
  $n$.  Furthermore, it has been heavily used in \cite{MR2007546} that
  for $j \ge 1$ the algebraic number $\gamma_j$ may depend on $g_j$, 
  as in Theorem \ref{thm:bugeaud}.
  This remark introduces a flexibility that is crucial for the present
  proof.

  We now outline the difference between the proof of Theorem
  \ref{thm:bugeaud} found in \cite{MR2007546} and the present proof.

  Let $\ell = 1, \ldots , n$.  For $j \ge 1$, we select $P_j (X)$, the
  minimal polynomial of $\gamma_j$, in such a way that the height of
  the polynomial $P_j (g X - c) - P_j (- c)$ is equal to the
  coefficient of $X^{\ell}$, where $\ell$ is congruent to $j$ modulo
  $n$.  This means that on evaluating the polynomial $P_j (g X - c)$
  at $\xi$, we get a linear form in the powers of $\xi$, say $a_n
  \xi^n + \ldots + a_1 \xi + a_0$, where $|a_{\ell}| > \max\{|a_n|,
  \ldots , |a_{\ell - 1}|, |a_{\ell + 1}|, \ldots , |a_1|\}$.
  Choosing $\ell = 1$, this allows us to control precisely the small
  values of the linear form $||x_n \xi^n + \ldots + x_1 \xi||$ subject
  to the condition $|x_1| > \max\{|x_2|, \ldots , |x_n|\}$.  This
  corresponds exactly to the exponent $\mu_{n, 1} (\xi^n, \ldots ,
  \xi)$.  Similarly, we can control the exponents $\mu_{n, 2} (\xi^n,
  \ldots , \xi), \ldots , \mu_{n, n} (\xi^n, \ldots , \xi)$ by
  selecting $\ell$ appropriately. As we are controlling each exponent
  in a fixed residue class modulo $n$, we control simultaneously all
  exponents.

  We slightly modify the construction given in \cite{MR2007546}.
  Namely, we choose $c_j$ at each step in order to ensure
  \begin{equation}
    \label{eq:13}
    2^{2n+2} c_j \le g_j \le 2^{2n+3} c_j.
  \end{equation}
  With this choice, the resulting real number $\xi$ is lying in the
  interval $(2^{-2n-4}, 2^{-2n})$.

  Now, we give explicitly suitable minimal polynomials $P_j (X)$ of
  the numbers $\gamma_j$.  For $\ell = n$, that is, for $j$ divisible
  by $n$, we set $P_j(X) = X^n - 2 g_j^n$. Therefore, the minimal
  polynomial of $\xi_j$ is $(g_j X - c_j)^n - 2 g_j^n$, and, by
  \eqref{eq:13}, its largest coefficient is, besides the constant
  coefficient, equal to the coefficient of $X^n$. Note that we work
  here with the same polynomial as in the proof of Theorem
  \ref{thm:bugeaud} with the parameter $\mu = 1$.

  For any integer $\ell = 1, \ldots , n-1$ and any positive integer
  $a$, the polynomial $X^n - 2 (a X - 1)^{\ell}$ is irreducible, by
  Eisenstein's criterion applied with the prime $2$.  If $j$ is
  congruent to $\ell$ modulo $n$, we set
  \begin{equation}
    \label{eq:17}
    P_j(X) = X^n - 2 ([g_j^{(n-\ell)/\ell}] X - 1)^{\ell},
  \end{equation}
  where $[ \cdot ]$ denotes the integer part.  Therefore, the minimal
  polynomial of $\xi_j$ is
  \begin{equation*}
    Q_j (X) := P_j(g_j X - c_j) = (g_j X - c_j)^n - 2
    ([g_j^{(n-\ell)/\ell}] (g_j X - c_j) - 1)^{\ell},
  \end{equation*}
  and, by \eqref{eq:13}, its largest coefficient is equal to the
  coefficient of $X^{\ell}$.  Note that we work here with a family of
  polynomials of a similar shape to the one defined in Lemma 3 of
  \cite{MR2007546}. In particular, it is easily shown that $P_j(X)$ as
  in \eqref{eq:17} has exactly $\ell$ roots very close to 
  $1/[g_j^{(n-\ell)/\ell}]$ and that its other roots are not
  too close to each other.

  It remains for us to explain how one proceeds to control $|\xi -
  \xi_j|$.  Let $\mu_{n,1}, \ldots , \mu_{n,n}$ be real numbers with
  $\mu_{n, 1}$ sufficiently large and $\mu_{n,1} \le \ldots \le
  \mu_{n,n}$.  Instead of working with a single $\nu$ as in the proof
  of Theorem \ref{thm:bugeaud}, we work with a
  sequence $(\nu_j)_{j \ge 1}$. Observe that our choice for the
  polynomials $P_j(X)$ implies $H(\xi_j) \asymp g_j^n$ for $j \ge 1$.
  Let $\lambda_1, \ldots , \lambda_n$ be (large) real numbers to be
  chosen later on, and set $\nu_{j n + \ell} = \lambda_\ell$ for any
  $\ell = 1, \ldots , n$ and any $j \ge 1$.  Then $|\xi - \xi_j|
  \asymp H(\xi_j)^{- \lambda_{\ell}/n}$ if $j$ is congruent to $\ell$
  modulo $n$.

  We proceed as on page 101 of \cite{MR2007546}. Suppose that $j
  \equiv \ell \pmod{n}$. Then, $P_j(X)$ has exactly $\ell$ roots very
  close to each other with $\gamma_j$ being one of them. Let $\xi_j =
  \beta_{j1}, \beta_{j2}, \dots, \beta_{jn}$ denote the roots of
  $Q_j(X) = P_j(g_jX - c_j)$. We order these so that $\beta_{j1},
  \dots, \beta_{j\ell}$ correspond to the roots $\gamma_1, \dots,
  \gamma_\ell$ of $P_j(X)$ which are close.  It follows that 
  $\abs{\xi-\beta_{ji}} \asymp
  g_j^{-n^2 / \ell^2}$ for $i=2, \dots, \ell$.  Denote by
  $\gamma_{\ell+1}, \dots, \gamma_n$ the remaining roots of $P_j(X)$.
  Now, arguing as in \cite{MR2007546}, we get
  \begin{align*}
    |Q_j(\xi)| &= g_j^n \abs{\xi -\xi_j} \prod_{2 \leq i \leq \ell}
    \abs{\xi - \beta_{ji}} \prod_{\ell+1 \leq i \leq n} \abs{\xi -
      \beta_{ji}} \\
    &\asymp g_j^\ell H(\xi_j)^{- \lambda_{\ell}/n}
    g_j^{-(\ell-1)n^2 / \ell^2} \prod_{\ell+1 \leq i \leq n}
    \abs{\frac{1}{\left[g_j^{(n-\ell)/\ell}\right]} - \gamma_j}\\
    &\asymp H(\xi_j)^{- \lambda_{\ell}/n} g_j^{n - n^2 (\ell - 1)/\ell^2}
    \asymp H(\xi_j)^{- \delta_{\ell} - \lambda_{\ell}/n} \asymp
    H(Q_j)^{- \delta_{\ell} - \lambda_{\ell}/n}
  \end{align*}
  for $\delta_\ell := 1 - n (\ell - 1)/\ell$.  Here, we have used
  Lemma 6 of \cite{MR2007546} in order to control the product over the
  last $n-\ell$ roots. Note that for $\ell > 1$, $\delta_\ell$ is a
  negative number. However, this is of no importance for the
  approximation properties studied here, since we still have freedom
  to choose the $\lambda_\ell$.

  The fact that $\eta_{\ell}$ depends only on $\ell$ is a consequence
  of the particular shape of the polynomials $Q_j(X)$.  It is now
  sufficient to select $\lambda_{\ell}$ in such a way that
  $\delta_{\ell} + \lambda_{\ell}/n = \mu_{n, \ell}$.  With this
  choice, we get
  \begin{equation}
    \label{eq:20}
    \mu_{n, \ell} (\xi^n, \ldots , \xi) \geq \mu_{n, \ell},
    \quad \ell \geq 1, \ldots , n,
  \end{equation}
  as expected.

  The final estimate needed is a lower bound for $\abs{Q(\xi)}$ when
  $Q(X) \neq Q_j(X)$ for any $j$.  In order to obtain such a bound, we
  argue again as in \cite{MR2007546}.  Let $Q(X) = a R_1(X)\cdots
  R_p(X)$ be a factorisation of $Q(X) \neq Q_j(X)$, a polynomial of
  degree at most $n$, into primitive irreducibles. Using the property
  analogous to (III$_j$) of Theorem \ref{thm:bugeaud} with the present
  polynomials, we find in analogy with equation (24) of
  \cite{MR2007546}, that for $1 \leq i \leq p$,
  \begin{align*}
    \abs{R_i(\xi)} &\gg H(R_i)^{2-\deg(R_i)}\abs{\xi - \alpha} \gg
    H(R_i)^{-\lambda -\deg(R_i) +2}\\
    & \gg H(R_i)^{-\lambda-n+2} \gg H(R_i)^{-\mu_{n,1}}.
  \end{align*}
  The last inequality follows on insisting that $\mu_{n,1}$ is large
  enough.  Using the so-called Gelfond-inequality,
  \begin{align*}
    \abs{Q(\xi)} \gg (H(R_1) \cdots H(R_p))^{-\mu_{n,1}} \gg
    H(Q)^{-\mu_{n,1}}. 
  \end{align*} 
  It immediately follows that every polynomial taking $(\xi^n, \dots,
  \xi)$ close sufficiently close to zero is found among the $Q_j$, so
  that
  \begin{equation*}
    \mu_{n, \ell} (\xi^n, \ldots , \xi) \leq \mu_{n, \ell},
    \quad \ell = 1, \ldots , n.
  \end{equation*}
  Together with \eqref{eq:20}, this completes the proof that the tuple
  $\alpha = (\xi^n, \xi^{n-1}, \dots, \xi)$ satisfies all the desired
  equalities. Additionally, there is still enough flexibility in the
  construction to ensure that there are continuum many such $\xi$.
  This completes the proof.
  
  To conclude, we point out that we can construct a suitable $\alpha$
  with $\mu_{n, 1} (\alpha) \ll n^3$, which ensures that the exponents
  are not all infinite. Our process is, like in \cite{MR2007546},
  effective.
\end{proof}

\section{Lower bounds for the exponents $\mu_{n, \ell}$}
\label{sec:lower-bounds-expon}

Using the exponents $w_2$ and $\hat{w}_2$, it is easily seen that
Lemma 1 of Schmidt \cite{MR2264714} can be rewritten as
\begin{equation*}
  \mu_{2,1} \ge \frac{\hat{w}_2}{\hat{w}_2 - 1}.
\end{equation*}

Its proof can be straightforwardly extended to arbitrary $n$ and $\ell$.
This was already done by Thurnheer for $\ell = n-1$.

\begin{prop}
  \label{prop:thurnheer}
  Let $n \ge 2$ be an integer.
  For any $n$-tuple $\alpha$ 
  and any integer $\ell = 1, \ldots , n$, we have
  \begin{equation}
    \label{eq:19}
    \mu_{n, \ell} (\alpha) \ge  \frac{\ell \, \hat{w}_n
      (\alpha)}{\hat{w}_n (\alpha)  - n + \ell}.
  \end{equation}
\end{prop}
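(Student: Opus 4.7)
The approach is a Minkowski-style argument: set up a convex body in $\mathbb{R}^{n+1}$ whose dimensions are tuned to the target exponent $\mu$, then perform a case analysis on which group of coordinates of the extracted integer vector attains its height.

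Fix $\hat{w}$ with $n-\ell < \hat{w} < \hat{w}_n(\alpha)$; when $\hat{w}_n(\alpha) \le n-\ell$, the right-hand side of \eqref{eq:19} is non-positive and there is nothing to prove. Put
\begin{equation*}
    s = \frac{\ell}{\hat{w} - n + \ell}, \qquad \mu = s \hat{w} = \frac{\ell \hat{w}}{\hat{w} - n + \ell},
\end{equation*}
so that the identity $\ell + s(n-\ell) = \mu$ is forced. For any sufficiently large real $X$, consider the symmetric convex body
\begin{equation*}
    K(X) = \bigl\{(\ux, y) \in \mathbb{R}^{n+1} : \abs{x_i} \le X \text{ for } i \le \ell,\ \abs{x_i} \le X^s \text{ for } i > \ell,\ \abs{\ux \cdot \alpha - y} \le X^{-\mu}\bigr\},
\end{equation*}
whose Lebesgue volume equals $2^{n+1}$ by the choice of $s$ and $\mu$. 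Minkowski's first theorem then supplies a non-zero integer point $(\ux, y) \in K(X)$, and since $X^{-\mu} < 1$ for $X$ large we must have $\ux \neq 0$.

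I would distinguish two cases. If $\max_{i \le \ell} \abs{x_i} > \max_{i > \ell} \abs{x_i}$ (the \emph{good} case), then $\ux \in \mathcal{C}_{n,\ell}$, $H(\ux) \le X$, and
\begin{equation*}
    \norm{\ux \cdot \alpha} \le X^{-\mu} \le H(\ux)^{-\mu},
\end{equation*}
producing a solution of the type counted by $\mu_{n,\ell}$. Otherwise (the \emph{bad} case), $H(\ux) = \max_{i > \ell} \abs{x_i} \le X^s$ and $\norm{\ux \cdot \alpha} \le X^{-\mu} = (X^s)^{-\hat{w}}$, i.e.\ $\ux$ saturates the $\hat{w}_n$-bound at the reduced scale $X^s$. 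In this case I would iterate the construction, replacing $X$ by $X^s < X$ (working in the principal regime $\hat{w} > n$, whence $s < 1$). The iterated scales $X, X^s, X^{s^2}, \ldots$ descend geometrically toward $1$, and once the scale falls below the threshold where $X_k^s < 1$, the constraints $\abs{x_i} \le X_k^s$ for $i > \ell$ force the non-zero Minkowski vector into the good case.

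Running this process along a sequence $X_j \to \infty$ yields for each $j$ a terminal vector $\ux^{(j)} \in \mathcal{C}_{n,\ell}$ satisfying $\norm{\ux^{(j)} \cdot \alpha} \le H(\ux^{(j)})^{-\mu}$. Since $\norm{\uv \cdot \alpha}$ is strictly positive for each fixed non-zero $\uv \in \mathbb{Z}^n$ (by the standing linear independence of $1, \alpha_1, \ldots, \alpha_n$ over $\rationals$), the terminal vectors cannot all be confined to a bounded set, so one extracts infinitely many distinct good vectors. This gives $\mu_{n,\ell}(\alpha) \ge \mu$, and letting $\hat{w} \nearrow \hat{w}_n(\alpha)$ concludes \eqref{eq:19}. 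The main technical obstacle is controlling the bad-case iteration: specifically verifying that distinct starting scales $X_j$ produce genuinely distinct terminal vectors rather than cycling on a small bounded set of low-height integer vectors. The setup is cleanest when $\hat{w} > n$ (so $s < 1$); the marginal regime $n - \ell < \hat{w} \le n$ (where $s \ge 1$) requires a more delicate accounting of scales, but the Minkowski body and case split are identical.
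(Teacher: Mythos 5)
Your convex body and exponent bookkeeping are essentially the paper's (your $s$ is the reciprocal of its $\eta$, with the two box sizes attached to the same groups of coordinates), and the dichotomy according to where the height of the Minkowski point is attained is also the same. The gap is in the logic of the \emph{bad} case. A bad point at scale $X$, i.e.\ one with $H(\ux)\le X^{s}$ and $\norm{\ux\cdot\alpha}\le (X^{s})^{-\hat w}$, is in no tension whatsoever with your hypotheses: since you chose $\hat w<\hat w_n(\alpha)$, the definition of $\hat w_n$ \emph{guarantees} such points exist at every large scale, so nothing forces your iteration to stop while the scale is still large; it can hit bad points all the way down. Your termination claim also fails at the bottom: the last coordinates are forced to vanish only once $X_k^{s}<1$, i.e.\ $X_k<1$, but then \emph{all} of $x_1,\dots,x_n$ are forced to vanish and the nonzero Minkowski point can simply be $(0,\dots,0,\pm1)$, which is not an admissible solution. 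Even when the process does end with a good vector, its height is at most the terminal scale, which stays bounded along the whole sequence $X_j\to\infty$, and the positivity argument you invoke does not rescue this: the terminal vectors only satisfy $\norm{\ux^{(j)}\cdot\alpha}\le X_{\mathrm{term},j}^{-\mu}$ with $X_{\mathrm{term},j}$ bounded, so there is no contradiction with the fact that $\norm{\ux\cdot\alpha}$ is bounded below on any bounded set of nonzero integer vectors. Finally, the regime you call marginal, $\hat w\le n$ (so $s\ge1$), is actually the generic one (typically $\hat w_n(\alpha)=n$), and there the scales do not descend at all; this is not a matter of ``more delicate accounting''.

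The repair is to run the dichotomy in the opposite direction, which is what the paper does. Fix $\epsilon>0$ with $\ell<\mu_{n,\ell}(\alpha)+\epsilon<n$ (the cases $\mu_{n,\ell}(\alpha)\ge n$ being trivial since $\hat w_n\ge n$ makes the right-hand side of \eqref{eq:19} at most $n$), and tune the body so that a \emph{good} point at scale $N$ would satisfy $\norm{\ux\cdot\alpha}\le H(\ux)^{-(\mu_{n,\ell}(\alpha)+\epsilon)}$ with $\ux$ in the cone. Because each fixed nonzero integer vector $\ux$ has $\norm{\ux\cdot\alpha}>0$, good points for arbitrarily large $N$ would produce infinitely many distinct such solutions, contradicting the definition of $\mu_{n,\ell}(\alpha)$ as a supremum. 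Hence for every large $N$ the Minkowski point is bad, and the fact that this happens for \emph{every} large $N$ is precisely a uniform statement: it yields $\hat w_n(\alpha)\ge \ell\eta+n-\ell$ with $\eta$ chosen so that $(\ell\eta+n-\ell)/\eta=\mu_{n,\ell}(\alpha)+\epsilon$; letting $\epsilon\to0$ and rearranging gives \eqref{eq:19}. In other words, the bad case is not something to iterate away but the very source of the inequality, and your construction cannot be completed without this reversal of roles between $\mu_{n,\ell}$ and $\hat w_n$.
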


\begin{proof}
For simplicity, we write $\mu_{n, \ell}$ for $\mu_{n, \ell} (\alpha)$
and $\hat{w}_n$ for $\hat{w}_n (\alpha)$.
Without loss of generality, we may asume that $\ell < \mu_{n, \ell} < n$.
Let $\eta \ge 1$ be a real number.
Consider the convex body $\mathcal{B}$ given by the equations
\begin{align*}
  |x_1|, \ldots , |x_{\ell}| &\le N^{\eta}, \\
  |x_{\ell + 1}|, \ldots , |x_n| &\le N,\\
  |x_1 \alpha_1 + \ldots + x_n \alpha_n + x_0| &\le N^{-\ell \eta - n
    + \ell} = (N^{\eta})^{-(\ell \eta + n - \ell)/\eta}. 
\end{align*}
By Minkowski's theorem, it contains a non-zero point with integer
coordinates.  Let $\epsilon$ be a positive real number with $\epsilon
< \mu_{n, \ell} - \ell$ and $\epsilon < n - \mu_{n, \ell}$.  The
definition of the exponent $\mu_{n, \ell}$ implies that, when $N$ is
sufficiently large, the system of equations
\begin{align*}
  |x_1|, \ldots , |x_n| &\le N^{\eta}, \\
  \max\{|x_1|, \ldots , |x_\ell|\} &> \max\{|x_{\ell + 1}|, \ldots ,
  |x_n|\}, \\
  |x_1 \alpha_1 + \ldots + x_n \alpha_n + x_0| &\le
  (N^{\eta})^{-(\mu_{n, \ell} + \epsilon)},
\end{align*}
has no solution.  Consequently, if $\eta$ is defined by
\begin{equation*}
  \mu_{n, \ell} + \epsilon = \frac{\ell \eta + n - \ell}{\eta},
\end{equation*}
then, for large $N$, any non-zero integer point $(x_0, x_1, \ldots ,
x_n)$ in $\mathcal{B}$ satisfies
\begin{equation*}
  \max\{|x_1|, \ldots , |x_\ell|\}
  \le \max\{|x_{\ell + 1}|, \ldots , |x_n|\} \le N.
\end{equation*}
This shows in turn that
\begin{equation*}
  \hat{w}_n \ge \ell \eta + n - \ell =  (n- \ell)
  \left(1 + \frac{\ell}{\mu_{n, \ell} + \epsilon - \ell}\right),
\end{equation*}
which gives the desired inequality when $\epsilon$ tends to zero.
\end{proof}

Since $w_n$ is almost always equal to $n$ (see \cite{MR0245527}) and
$\mu_{n, \ell} \le w_n$, it immediately follows from Proposition
\ref{prop:thurnheer} that the exponent $\mu_{n, \ell}$ is almost
always equal to $n$. This gives an alternative proof of the first
assertion of Theorem \ref{thm:main1}.

When we follow the second part of the proof of Theorem 1 of Schmidt
\cite{MR0429762}, we see that he 
actually established the inequality
\begin{equation*}
  \mu_{2, 1} \ge \hat{w}_2 - 1 + \frac{\hat{w}_2}{w_2},
\end{equation*}
although he only used the (often) weaker inequality
$\mu_{2, 1} \ge \hat{w}_2 - 1$.

Likewise, Thurnheer \cite{MR1056107} extended in 1990 Schmidt's result by
proving that $\mu_{n, n-1} \allowbreak \ge \hat{w}_n - 1$, but his paper
contains the proof of the lower bound
\begin{equation*}
  \mu_{n, n-1} \ge \hat{w}_n - 1 + \frac{\hat{w}_n}{w_n},
\end{equation*}
as given in Theorem \ref{thm:main3}.

\section{Concluding remarks}
\label{sec:restr-appr-along}

It is interesting to note that while the results proved by metrical
methods, \emph{i.e.}, Theorem \ref{thm:main1}, are results in all of
$\mathbb{R}^n$, the explicit constructions of Theorem \ref{thm:main2}
are carried out on the Veronese curve $(\xi^n, \xi^{n-1}, \dots, \xi)
\subseteq \mathbb{R}^n$. For the classical exponent $w_n = \mu_{n,n}$,
the metrical theory for Lebesgue measure is the same in the two
settings, as shown by Beresnevich \cite{MR1709049} for the Veronese
curves and more generally for non-degenerate manifolds by Beresnevich,
Bernik, Kleinbock and Margulis \cite{MR1944505}.

We have not been able to show that the metrical theory remains the
same when restricted to non-degenerate curves and manifolds for 
$\mu_{n, \ell}$ with $\ell
< n$. Nonetheless, it remains of interest whether the results of the
present paper may be extended or improved on such sets. 

For what it is worth, if $\alpha = (\xi^n, \xi^{n-1}, \dots, \xi)$, then
$\hat{w}_n(\alpha)$ is at most $2n-1$, as established by
Davenport and Schmidt \cite{MR0246822}. Hence, using Theorem
\ref{thm:main3},
\begin{equation}
  \label{eq:18}
  \mu_{n, \ell} (\alpha) \ge 2 \ell - \frac{\ell (2\ell - 1)}{n-1 + \ell}.
\end{equation}
Inserting $\ell = 1$ and letting $n$ increase, this provides a
positive answer to Problem 2 along Veronese curves. 

\providecommand{\bysame}{\leavevmode\hbox to3em{\hrulefill}\thinspace}
\providecommand{\MR}{\relax\ifhmode\unskip\space\fi MR }
\providecommand{\MRhref}[2]{%
  \href{http://www.ams.org/mathscinet-getitem?mr=#1}{#2}
}
\providecommand{\href}[2]{#2}

\end{document}
